\newtheorem{thm}{Theorem}[section]
\newtheorem{lemma}[thm]{Lemma}
\newtheorem{cor}[thm]{Corollary}
\theoremstyle{definition}
\newcommand{\thmref}[1]{Theorem~\ref{#1}}
\newcommand{\lemref}[1]{Lemma~\ref{#1}}
\newcommand{\Ad}{{\rm Ad} \hspace{0.1em}}
\newcommand{\Tr}{{\rm Tr} \hspace{0.1em}}
\newcommand{\supp}{{\rm supp} \hspace{0.1em}}
\newcommand{\spec}{{\rm sp} \hspace{0.1em}}
\renewcommand{\l}{\ell}
\providecommand{\norm}[1]{\left\lVert#1\right\rVert}
\title{Automorphisms of a non-type I $C^*$-algebra}
\author[A. Noguchi]{Akira Noguchi}
\address{
Department of Mathematics, Hokkaido University,
Hokkaido\\ \indent \mbox{060-0810},
JAPAN}
\email{anoguchi@math.sci.hokudai.ac.jp}
\begin{document}

\maketitle

\begin{abstract} 
Glimm's theorem says that a UHF algebra is almost embedded in a separable $C^*$-algebra not of type I. 
Applying his methods we obtain a covariant version of his result; 
a UHF algebra with a product type automorphism is covariantly embedded in such a $C^*$-algebra 
equipped with an automorphism with full Connes spectrum.
\end{abstract}

\section{Introduction}
Today it is important to study group actions on operator algebras, 
both of $C^*$-algebras and von Neumann algebras. 
In this paper we treat an embedding problem of automorphisms, using Glimm's idea.

In \cite{G}, Glimm studied type I $C^*$-algebras; 
a $C^*$-algebra $A$ is {\em of type I} 
if each non-zero quotient of $A$ contains a non-zero positive element $x$ such that $xAx$ is commutative. 
A part of his proof implies a celebrated theorem known as {\em Glimm's theorem}: 
For a separable $C^*$-algebra $A$ which is not of type I and a UHF algebra $D$, 
there is a $C^*$-subalgebra $B$ of $A$ and a closed projection $q$ in the enveloping von Neumann algebra of $A$ 
such that $q \in B' $, $qAq=Bq$ and $Bq \simeq D$, 
where $B' $ is the commutant of $B$. 
Roughly speaking, this theorem says that {\em any UHF algebra is almost embedded in such a $C^*$-algebra}. 
In fact, he proved this theorem only for $D = \otimes _{n=1}^{\infty } M_2 $, 
known as the Fermion algebra, 
and Pedersen arranged his proof and generalized to the case of an arbitrary UHF algebra in \cite{P}.
 
According to Glimm's theorem, we are able to embed UHF algebras. 
How about group actions? 
It is still an open problem whether or not general actions of UHF algebras can be embedded. 
Bratteli, Kishimoto and Robinson first succeeded in embedding actions of compact groups of a special type in \cite{BKR}. 
They embedded an action of a compact group on a UHF algebra $\otimes _{n=1}^{\infty } M_{k_n} $ 
of the form $\gamma _t = \otimes _{n=1}^{\infty } \Ad u_{nt} $, 
where  $t \mapsto u_{nt} $ is a unitary representation on $M_{k_n} $. 
They call an action of this form ''a product type action.'' 
Since any irreducible representation of a compact group is finite dimensional, 
a product type action seems standard. 
One decade and a half later, 
product type actions of $\mathbb{R}$ were embedded by Kishimoto in \cite{Ki2}. 
While $\mathbb{R}$ itself is easy to understand, 
non-compactness of $\mathbb{R}$ makes this embedding problem much more delicate, 
and the action (called "flow") need to be perturbed. 
In this paper, we treat the $\mathbb{Z}$-action case, 
i.e. the automorphism case. 
Since $\mathbb{Z}$ is not compact, 
a perturbation is also needed in this case. 
So the result is as follows:

\begin{thm} \label{main}
Let $A$ be a separable prime $C^*$-algebra and $\alpha $ an automorphism of $A$. 
Then the following are equivalent: 
\begin{enumerate}
\item 
The Connes spectrum $ \Gamma (\alpha ) $ of $\alpha $ is equal to $\mathbb{T} $. 
\label{fcs} 
\item 
For any UHF algebra $D =  \otimes _{n=1} ^{\infty} M_{k_n} $, 
any  automorphism $\gamma $ of $D$ of the form $\gamma = \otimes _{n=1} ^{\infty} \Ad e^{ih_n} $, 
where $M_{k_n} $ is the $k_n \times k_n $ matrix algebra with $k_n \geq 2 $ and $h_n \in M_{k_n} $ a self-adjoint matrix for each $n$, and any $\epsilon > 0$, 
there is a $C^*$-subalgebra $B$ of $A$, a unitary $v$ in $A$ (in $A+ \mathbb{C} 1 $ if $A$ is not unital) 
and a closed projection $q$ of the enveloping von Neumann algebra of $A$ which is in the commutant of $B$ 
such that
\begin{align*}
& \norm {v-1} < \epsilon , \quad 
\alpha ^{(v)} (B) = B , \\
& ( \alpha ^{(v)} )^{**} (q) = q , \quad 
qAq = Bq , \\
& (Bq , ( \alpha ^{(v)} )^{**} |Bq )  \simeq  (D, \gamma ) 
\end{align*} 
and for $x \in A$, $x=0$ if and only if $xc(q)=0$, 
where $\alpha ^{(v)} := \alpha \ \circ $ $\Ad v$ is a perturbation of $\alpha $ 
and $c(q)$ is the central cover of $q$ 
and $(\alpha ^{(v)} )^{**} |Bq $ is the restriction of $(\alpha ^{(v)} )^{**} $ to $Bq$. 
\label{qma}
\end{enumerate}
\end{thm}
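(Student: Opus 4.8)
The plan is to prove the two implications separately; (2)$\Rightarrow$(1) is routine, and the substance lies in (1)$\Rightarrow$(2). Throughout I take for granted the description of $\Gamma(\alpha)$ as the intersection of the Arveson spectra $\mathrm{Sp}(\alpha|_{B})$ over non-zero $\alpha$-invariant hereditary $C^{*}$-subalgebras $B\subseteq A$, together with its invariance under inner perturbations $\alpha\mapsto\alpha^{(v)}$ and the standard relation, for $\alpha$-prime $A$, between the equality $\Gamma(\alpha)=\mathbb{T}$ and primeness of $A\rtimes_{\alpha}\mathbb{Z}$.

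For (2)$\Rightarrow$(1): apply (2) to a single convenient datum, for instance $D=\bigotimes_{n}M_{2}$ with $e^{ih_{n}}=\mathrm{diag}(1,e^{2\pi i\theta})$ for a fixed irrational $\theta$, for which $D$ is simple and $\Gamma(\gamma)=\mathbb{T}$ already. Since $\Gamma(\alpha)=\Gamma(\alpha^{(v)})$, it suffices to see $\Gamma(\alpha^{(v)})=\mathbb{T}$, and this follows from the covariant corner $qAq=Bq\simeq D$ and the fullness clause ``$x=0\iff xc(q)=0$'' by a standard argument: the closed projection $q$ ties $A$, hereditarily, to the simple algebra $D$, whose crossed product $D\rtimes_{\gamma}\mathbb{Z}$ is simple, and primeness of $A\rtimes_{\alpha^{(v)}}\mathbb{Z}$ is forced across this link. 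I would not dwell on this direction.

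For (1)$\Rightarrow$(2) the first observation is that $A$ cannot be of type I: were it so, then (being separable and prime, hence primitive) $A$ would be GCR and would possess a smallest non-zero ideal $J\cong\mathcal{K}(H)$; any automorphism must fix $J$, so $\alpha|_{J}=\Ad u$ for a unitary $u$ on $H$, and compressing by spectral projections of $u$ supported on small arcs of $\mathbb{T}$ shows $\Gamma(\alpha|_{J})=\{1\}$. Since every $\alpha$-invariant hereditary subalgebra of $J$ is one of $A$ as well, this gives $\Gamma(\alpha)\subseteq\Gamma(\alpha|_{J})=\{1\}$, contradicting (1). Hence Glimm's construction, in the form organised by Pedersen in \cite{P}, applies, and the idea is to run it covariantly: one builds $B=\overline{\bigcup_{n}D_{n}}$ from an increasing chain $D_{1}\subset D_{2}\subset\cdots$ with $D_{n}\cong M_{k_{1}}\otimes\cdots\otimes M_{k_{n}}$, together with a decreasing sequence of projections in $A^{**}$ converging to a closed projection $q$, arranged so that $D_{n}q\simeq D_{n}$ compatibly, $Bq\simeq D$, $qAq=Bq$ and $c(q)$ is full, and — the new ingredient — so that $(\alpha^{(v)})^{**}$ restricts on $D_{n}q$ to $\otimes_{j\le n}\Ad e^{ih_{j}}$ for a single perturbing unitary $v$. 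That unitary is assembled as a norm-convergent product $v=\prod_{n}v_{n}$, with $v_{n}$ the small correction introduced at stage $n$ taken in the unitisation of the relative commutant of $D_{n-1}$, so that later corrections leave already-placed blocks exactly equivariant, and with $\sum_{n}\norm{v_{n}-1}<\epsilon$, so $\norm{v-1}<\epsilon$.

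The inductive step is where the argument is genuinely difficult and is the point I expect to be the main obstacle. Given $D_{n}$ placed so that $\alpha^{(v_{1}\cdots v_{n})}$ agrees on it with $\otimes_{j\le n}\Ad e^{ih_{j}}$, one must produce inside a suitable $\alpha^{(v_{1}\cdots v_{n})}$-invariant hereditary subalgebra of the relative commutant of $D_{n}$ a full system of matrix units $\{e^{(n+1)}_{st}\}_{s,t=1}^{k_{n+1}}$ — approximately orthogonal to and compatible with everything recorded so far and with the evolving projections — such that, after one more small inner correction $v_{n+1}$, one has \emph{exactly} $\alpha^{(v_{1}\cdots v_{n+1})}(e^{(n+1)}_{st})=\lambda_{s}\bar\lambda_{t}\,e^{(n+1)}_{st}$, where $e^{ih_{n+1}}=\sum_{s}\lambda_{s}p_{s}$ and the $e^{(n+1)}_{ss}$ refine the $p_{s}$. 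The Glimm doubling (iterated about $\log_{2}k_{n+1}$ times) delivers the matrix units from the non-type-I property alone; the hypothesis $\Gamma(\alpha)=\mathbb{T}$ enters precisely to ensure that, inside any invariant hereditary subalgebra, these matrix units can be chosen to be \emph{arbitrarily good approximate} eigenvectors for the prescribed finite set of phases $\lambda_{s}\bar\lambda_{t}$, so that the unitary $v_{n+1}$ needed to turn the approximate eigenvalue relations into exact ones is correspondingly small — note $\norm{h_{n+1}}$ need not be small, so the smallness of $v_{n+1}$ must come from the quality of the spectral approximation rather than from $h_{n+1}$. Carrying this out while simultaneously respecting the approximate-orthogonality bookkeeping, the relative-commutant constraint on $v_{n+1}$, and Pedersen's construction of the closed projection $q$ is the delicate analytic core, and is exactly the type of perturbation argument Kishimoto developed for flows in \cite{Ki2}; what the theorem asks is to carry it through for automorphisms.
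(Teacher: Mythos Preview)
Your overall architecture is right and matches the paper: (2)$\Rightarrow$(1) via the irrational-rotation product-type test, and (1)$\Rightarrow$(2) by running Glimm--Pedersen covariantly with an infinite product of small unitary corrections. But the mechanism you propose for the inductive step is not the one that works, and the discrepancy is not cosmetic.

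You describe the finite stages as genuine matrix subalgebras $D_{n}\cong M_{k_{1}}\otimes\cdots\otimes M_{k_{n}}$ of $A$, with $v_{n+1}$ chosen in the relative commutant of $D_{n}$, and with \emph{exact} algebra-level relations $\alpha^{(v_{1}\cdots v_{n+1})}(e^{(n+1)}_{st})=\lambda_{s}\bar\lambda_{t}\,e^{(n+1)}_{st}$ at each stage. None of this is available. In the Glimm--Pedersen construction the elements $x_{nj}$ form only a \emph{quasi}-matrix system: $x_{nj}^{*}x_{nj}$ is not a projection, the $C^{*}$-algebra they generate inside $A$ is not a matrix algebra, and there is no relative commutant structure to place $v_{n+1}$ in. The matrix-unit relations, and hence the exact equivariance, hold only after multiplying by the closed projection $q$, i.e.\ only in the limit; at finite stages one has nothing better than approximate identities of the form $\norm{(\alpha^{(\overline{v})}(w_{i}^{(m)})-e^{ip_{i}^{(m)}}w_{i}^{(m)})w_{0}^{(m)}}<\epsilon_{m}$.

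What the paper actually does, and what your outline is missing, is to work in a faithful $\alpha$-covariant \emph{irreducible} representation $(\pi,U)$ (whose existence under $\Gamma(\alpha)=\mathbb{T}$ is the input from \cite{BEEK}), first perturbing so that $U$ has a fixed unit vector $\xi_{0}$. All exact spectral information lives at the level of vectors: one arranges that the $\pi(w_{i}^{(m)})\xi_{0}$ are exact eigenvectors of $U^{(\overline{v_{m}})}$ with the prescribed eigenvalues. The new correction $v_{m+1}$ is obtained by Kadison transitivity on the finite orthonormal family $\{\pi(w_{k}^{(m)}y_{\ell})\xi_{0}\}$, not by any commutant condition; earlier eigenvector relations are preserved because $v_{m+1}$ is chosen to fix the old vectors, not because it commutes with old algebra elements. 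The hypothesis $\Gamma(\alpha)=\mathbb{T}$ enters through the lemma $\norm{\pi(e_{N})E_{U}(q-\epsilon,q+\epsilon)}=1$ for every arc, which supplies the approximate eigenvectors $\eta_{\ell}$; infinite multiplicity (needed when eigenvalues repeat) comes from showing $(\pi\rtimes U)(A\rtimes_{\alpha}\mathbb{Z})$ contains no non-zero compacts. The algebra-level equivariance $\alpha^{(\overline{v})}(x_{ni})q=e^{ip_{ni}}x_{ni}q$ is then deduced \emph{a posteriori} from the vector relations and the estimates, once $q$ is in place. Your sketch has the right shape but the wrong hinge: the argument is vector-based via a covariant irreducible representation, not algebra-based via relative commutants.
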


In the statement above, 
the $\sigma $-weakly extended automorphism of $\alpha ^{(v)} $ to the enveloping von Neumann algebra of $A$ 
is denoted by $( \alpha ^{(v)} )^{**} $, 
but we will later omit the stars; 
the same applies to representations, etc.

It seems natural that condition (\ref{fcs}) is necessary when the condition (\ref{qma}) is true. 
If $A$ was simple and $ \Gamma (\alpha ) \neq \mathbb{T} $, 
$\alpha ^n$ would be inner in the multiplier algebra of $A$ for some $n$ (8.9.9 in \cite{Ped}), 
so very few $\gamma $'s would satisfy \thmref{main}. 

In the hypothesis of the theorem above, 
if $A$ has a faithful irreducible representation and $ \Gamma (\alpha ) = \mathbb{T} $, then $A$ is automatically not of type I. 
This can be proved as follows. 
Suppose that $x$ is a positive element of $A$ such that $xAx$ is commutative. 
The norm closure of $xAx$ is a hereditary sub-$C^*$-algebra of $A$, 
whose image of irreducible representation is an algebra of one-dimensional operators. 
This contradicts $ \Gamma (\alpha ) = \mathbb{T} $ (by the same argument as in the proof of \lemref{ncpt}). 

Note that the $\sigma $-weak closure of a UHF algebra 
can be an AFD (approximately finite dimensional) factor of various type; 
concretely, of type II$_1$, II$_{\infty} $ and III$_{\lambda } $, $0 \leq \lambda \leq 1$. 
Here is an example of construction of an AFD factor of type III$_{\lambda } $, $0 < \lambda < 1$. 
Let $\phi ^{(2)} $ be the $\Ad \left( \begin{array}{cc} 1 & 0 \\ 0 & \lambda ^{it} \end{array} \right) $-KMS-state on $M_2 $; i.e. 
\[ \phi ^{(2)} (x):=\frac{\Tr \left( \left(
\begin{array}{cc} 
1 & 0 \\
0 & \lambda \\
\end{array}
\right) x \right)}
{\Tr \left(
\left( \begin{array}{cc} 
1 & 0 \\
0 & \lambda \\
\end{array}
\right) \right) } \]
for $x \in M_2 $, where $\Tr $ denotes the usual trace on $M_2 $, 
and set $\phi := \otimes _{n=1}^{\infty} \phi _n $, 
where $\phi _n := \phi ^{(2)} $ for each $n$. 
Then it follows that the $\sigma $-weak closure of $\pi _{\phi } ( \otimes _{n=1}^{\infty } M_2 )$, 
denoted by $\pi _{\phi } ( \otimes _{n=1}^{\infty } M_2 )''$, 
where $\pi _{\phi } $ is the GNS representation of $\phi $, 
is an AFD factor of type III$_{\lambda } $ (see \cite{T}, XVIII.1.1).
We state a straightforward corollary and end the introduction.

\begin{cor} 
Let $A$ be a separable prime $C^*$-algebra 
and $\alpha $ an automorphism of $A$ with the Connes spectrum $ \Gamma (\alpha ) = \mathbb{T} $. 
Then, for any AFD factor $M$, 
there are an $\alpha $-covariant representation $\pi $ of $A$ and a projection $Q$ of $\pi (A)'' $ with $c(Q)=1$ 
such that $Q\pi (A)'' Q \simeq M$.
\end{cor}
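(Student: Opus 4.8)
The plan is to realize $M$ as the weak closure of a GNS representation of a product state on a UHF algebra, to feed the resulting data into \thmref{main}, and then to induce a representation of $A$ along the $C^*$-correspondence furnished by the closed projection produced there.

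Following the remark preceding the corollary (see \cite{T}), fix a UHF algebra $D=\otimes_{n=1}^\infty M_{k_n}$ and a state $\psi$ on $D$ with $\pi_\psi(D)''\simeq M$, and take for $\gamma$ the identity automorphism of $D$, i.e. the product type automorphism with all $h_n=0$; this is admissible in condition (\ref{qma}). Applying \thmref{main} (with $\epsilon=1$, say) produces a $C^*$-subalgebra $B$ of $A$, a unitary $v$, and a closed projection $q\in B'$ with $(\alpha^{(v)})^{**}(q)=q$, $qAq=Bq$, $(\alpha^{(v)})^{**}$ acting trivially on $Bq\simeq D$, and $x=0$ if and only if $xc(q)=0$ for $x\in A$. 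Since $q\in B'$, the map $b\mapsto bq$ is a surjective $*$-homomorphism $B\to Bq$; a short computation with an approximate unit of $A$ shows that $q$ is the unit of $Bq$.

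Now form the right Hilbert $Bq$-module $X:=\overline{Aq}$ with $\langle aq,a'q\rangle:=qa^*a'q\in qAq=Bq$ and left $A$-action $a''\cdot aq:=(a''a)q$; this is well defined precisely because $qAq=Bq$ and $q\in B'$, so that $X$ is an $A$-$Bq$-correspondence. Let $(\pi,H)$ be the representation of $A$ induced from $(\pi_\psi,H_\psi)$ along $X$, so $H=X\otimes_{Bq}H_\psi$ and $\pi(a)(xq\otimes\eta)=(ax)q\otimes\eta$, and set $Q:=\pi^{**}(q)\in\pi(A)''$. Using only $q\in B'$ one finds $QH=\overline{Bq\otimes_{Bq}H_\psi}\simeq H_\psi$ and, via $qabq=(qaq)(bq)$ for $a\in A$, $b\in B$, that the compression $Q\pi(a)Q$ acts on $QH\simeq H_\psi$ as $\pi_\psi(qaq)$. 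Since $qAq=Bq$, this gives $Q\pi(A)Q=\pi_\psi(Bq)=\pi_\psi(D)$ and hence $Q\pi(A)''Q=\overline{Q\pi(A)Q}^{\,\sigma\text{-weak}}=\pi_\psi(D)''\simeq M$. Also, picking $b_0\in B$ with $b_0q=q$ gives $aq=(ab_0)q$, so $\pi(A)QH$ is dense in $H$ and therefore $c(Q)=1$ in $\pi(A)''$.

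It remains to make $\pi$ covariant. As $(\alpha^{(v)})^{**}$ fixes $q$ and acts trivially on $Bq$, it acts on $X$ by $aq\mapsto\alpha^{(v)}(a)q$ compatibly with the $Bq$-valued inner product, so $U(xq\otimes\eta):=(\alpha^{(v)})^{**}(xq)\otimes\eta$ is a unitary on $H$ with $U\pi(\cdot)U^*=\pi\circ\alpha^{(v)}$; since $\alpha=\alpha^{(v)}\circ\Ad v^*$, the unitary $W:=U\pi^{**}(v)^*$ implements $\alpha$ in $\pi$. Then $(\pi,W)$ is an $\alpha$-covariant representation of $A$ and $Q$ is as desired. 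The step I expect to take the most care is the corner identification $Q\pi(A)''Q\simeq M$: one must use the \emph{equality} $qAq=Bq$ (not just the inclusion) and the relation $q\in B'$ at precisely the right points, both to set up the correspondence $X$ and to see that the corner collapses onto all of $\pi_\psi(D)$ rather than onto some larger algebra; apart from that, the only external ingredient is the black box that every AFD factor of the relevant type is the weak closure of a product state on a UHF algebra.
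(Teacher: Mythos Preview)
Your proof is correct and lands on the same representation as the paper, but reaches it by a different route. The paper works directly with GNS: it transports a faithful state on $M$ through a $\sigma$-weakly dense UHF subalgebra $D\subset M$ (this is the content of \cite{EW}) to a state $\psi_0$ on $qAq=Bq$, sets $\psi(x):=\psi_0(qxq)$ on $A$, and takes $(\pi,Q):=(\pi_\psi,\pi_\psi(q))$; the corner identification $Q\pi_\psi(A)''Q\simeq M$ is then verified by a bare-hands comparison of inner products in $H_{\psi_0}$ and $H_\psi$. You instead package the passage from $Bq$ to $A$ as Rieffel induction along the $A$--$Bq$ correspondence $X=\overline{Aq}$. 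The two constructions coincide: your $(\pi,H)$ is unitarily equivalent to the paper's $(\pi_\psi,H_\psi)$ via $aq\otimes\pi_{\psi_0}(bq)\xi_{\psi_0}\mapsto\pi_\psi(ab)\xi_\psi$, and your $Q=\pi^{**}(q)$ agrees with the paper's. What your framing buys is a cleaner conceptual picture---the corner identification becomes the standard fact that compressing the induced representation to the range of $Q$ recovers the inducing representation---and you are more explicit than the paper about upgrading $\alpha^{(v)}$-covariance to $\alpha$-covariance via $W=U\pi(v)^*$. One small correction: the external input you need for ``every AFD factor is the weak closure of a UHF algebra in some state'' is \cite{EW}, not the III$_\lambda$ example preceding the corollary (which you cite as \cite{T}); the latter only illustrates one special case.
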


\begin{proof}
Note that an AFD factor always has a $\sigma$-weakly dense UHF subalgebra $D$ (\cite{EW}). 
We use \thmref{main} for $\gamma =$identity and obtain $B$, $v$ and $q$. 
We take a faithful state on $M$ and restrict it on $D$. 
This state gives one on $qAq=Bq$ through the isomorphism $(Bq , \alpha ^{(v)} |Bq )  \simeq  (D, \gamma )$, 
which is denoted by $\psi _0$. 
Because of a choice of $\gamma$, $\psi _0$ is $\alpha ^{(v)} |Bq$-invariant. 
We define a state $\psi$ on $A$ by $\psi (x):=\psi _0 (qxq) $ for $x \in A$. 
Let $(\pi _{\psi } , {\mathcal H}_{\psi } , \xi _{\psi })$, $(\pi _{\psi_0 } , {\mathcal H}_{\psi_0 } , \xi _{\psi_0 })$ 
be the GNS-triples of $\psi $ and $\psi _0 $, resp. 
Set $Q:=\pi _{\psi } (q)$. 
Then it follows that $Q\pi_{\psi}(A)Q=\pi_{\psi} (qAq)$, 
which implies $Q\pi_{\psi}(A)''Q=\pi_{\psi} (qAq)''$. 
We check that there is a natural isomorphism 
between $\overline{\pi_{\psi_0} (qAq)''\xi_{\psi_0}}^{\norm{\cdot}}$ and $\overline{\pi_{\psi} (qAq)''\xi_{\psi}}^{\norm{\cdot}}$. 
For any $x, y \in A$, there are $z,w \in B$ such that $qxq=zq$ and $qyq=wq$. 
So we have 
\begin{align*} \langle \pi_{\psi} (qxq)\xi _{\psi} , \pi_{\psi} (qyq)\xi _{\psi} \rangle 
&= \langle \pi_{\psi} (zq)\xi _{\psi} , \pi_{\psi} (wq)\xi _{\psi} \rangle \\
&= \langle \pi_{\psi} (z)\xi _{\psi} , \pi_{\psi} (w)\xi _{\psi} \rangle \\
&= \psi (w^* z) 
= \psi _0 (qw^* zq) \\
&= \langle \psi _0 (zq)\xi _{\psi _0} , \psi _0 (wq)\xi _{\psi _0} \rangle \\
&=\langle \psi _0 (qxq)\xi _{\psi _0} , \psi _0 (qyq)\xi _{\psi _0} \rangle , \\ \end{align*} 
since $\psi (q)=1$ implies $\pi_{\psi} (q)\xi _{\psi} =\xi _{\psi} $, where $ \langle \cdot , \cdot \rangle $ denotes the inner product.
Thus we may assume that 
${\mathcal H}_{\psi_0 } \subset {\mathcal H}_{\psi}$ and $\pi _{\psi }$ is an extension of $\pi _{\psi_0 }$.
By the construction of $\psi _0$, 
it follows that $\pi _{\psi _0} (qAq)'' \simeq M$, 
whence $Q\pi (A)''Q \simeq M$. 
Finally, since 
\begin{align*} 
c(Q) {\mathcal H} _{\psi} 
&= \overline{\pi _{\psi} (A) Q \pi _{\psi } (A) \xi _{\psi}}^{\norm{\cdot}} \\
& \supset \overline{\pi _{\psi} (A) Q \xi _{\psi}}^{\norm{\cdot}} 
= \overline{\pi _{\psi} (A) \xi _{\psi}}^{\norm{\cdot}} 
={\mathcal H} _{\psi } , 
\end{align*} 
we have $c(Q)=1$.
\end{proof}
 
\noindent 
{\bf Notations.} 
For a Hilbert space $\mathcal{H}$, 
$ \langle \cdot , \cdot \rangle $ denotes the inner product of $\mathcal{H}$, 
${\bf B} ({\mathcal H})$ the set of bounded operators on ${\mathcal H}$, 
and ${\bf K} ({\mathcal H})$ the set of compact operators on ${\mathcal H}$. 
For a $C^*$-algebra $A$, 
$A_{sa}$ denotes the set of self-adjoint elements in $A$, 
$A_{+}$ the set of positive elements in $A$, $A_1 $ the unit ball of $A$, 
and ${\mathcal U} (A)$ the set of unitary elements in $A$ (in $A+ \mathbb{C} 1 $ if $A$ is not unital). 
We denote by $A^{**} $ the enveloping von Neumann algebra of $A$. 
When $A$ is in some von Neumann algebra, 
$A'$ denotes the commutant of $A$ 
and $A''$ the double commutant of $A$, 
which is equal to the $\sigma $-weak closure of $A$. 
For an automorphism $\alpha $ of $A$, $\hat{\alpha} $ denotes the dual action of $\alpha $. 
For a unitary $U$ in ${\bf B} ({\mathcal H})$, 
$E_U$ denotes the spectral measure (on $\mathbb{T}$) of $U$. 
For a state $\phi $ of a $C^*$-algebra $A$, $\pi _{\phi} $ denotes the GNS representation of $\phi $, 
and $\supp \phi \in A^{**} $ the support projection of $\phi $. 
For a function $f$, $\supp f$ denotes the support of $f$.

\vspace{1em}

\noindent 
{\bf Acknowledgement.} 
The author is grateful to Akitaka Kishimoto for improving the contents and pointing out errors. 
The author is also indebted to Reiji Tomatsu for some pieces of advice.
\section{Proof of the main theorem} 
We can prove that (\ref{qma}) implies (\ref{fcs}) easily, so we prove it first. 
(We put stars for $\sigma $-weakly continuous extensions of automorphisms only in this proof.)
Let $D:= \otimes _{n=1}^{\infty } M_{k_n} $, where $k_n := 2$ for each $n$. 
Set $u_n :=\left( \begin{array}{cc} 1 & 0 \\ 0 & e^{2\pi i \theta } \end{array} \right) $ for each $n$, 
where $\theta $ is an arbitrary irrational number independent on $n$, 
and define an automorphism of $D$ by $\gamma :=\otimes \Ad u_n $. 
We get an isomorphism $(qAq , (\alpha ^{(v)})^{**} |qAq )  \simeq  (D, \gamma )$, 
where $q$ and $v$ are obtained by the condition (\ref{qma}). 
Let $\tau $ is the tracial state on $qAq$. 
Then it follows that $\pi _{\tau } (qAq)'' $ is the hyperfinite II$_1$-factor. 
Since $\sum _{n=1}^{\infty } |1-|(1+e^{2m\pi i \theta })/2||= \infty $,
the $\sigma $-weakly continuous extension of $(\alpha ^{(v)} )^m $ to $\pi _{\tau } (qAq)'' $ is outer for any $m \in \mathbb{Z} \backslash \{ 0 \} $ (\cite{C}, 1.3.7). 
Define a state $\psi $ on $A$ by $\psi (x) := \tau (qxq) $ for $x \in A$. 
Then, since $\psi \circ (\alpha ^{(v)})^{**} =\psi $, 
we can extend $(\alpha ^{(v)} )^{**}$ to $\pi _{\psi } (A)''$. 
Since $(\alpha ^{(v)})^{**} (\pi _{\psi} (q))=\pi _{\psi} (q)$, 
it follows that $((\alpha ^{(v)})^{**} )^m $ on $\pi _{\psi } (A)''$ is also outer for each $m$, 
whence $\Gamma (\alpha ^{**})=\mathbb{T}$ since $\mathbb{Z}$ is discrete.
Therefore we have $\Gamma (\alpha )=\mathbb{T}$ (see \cite{Ped}, 8.8.9). 

We will show that (\ref{fcs}) implies (\ref{qma}) from now. 
Before we begin the proof, 
we present Kadison's transitivity in the following form.
 
\begin{lemma} \label{kad}
For any $\epsilon >0$ and any natural number $m$, 
there is a $\delta >0$ such that the following holds:

Let $A$ be a $C^*$-algebra, 
$\pi $ an irreducible representation on a Hilbert space ${\mathcal H}$, 
and $V$ a unitary in ${\bf B} ({\mathcal H})$. 
Let $(\xi _1 , \cdots , \xi _m )$ be a finite family of mutually orthogonal unit vectors in ${\mathcal H}$. 
If $\norm{V\xi _j - \xi _j } < \delta $ for $j=1, \cdots , m$, 
there is a $v$ in ${\mathcal U} (A)$ such that 
$\norm{v-1} < \epsilon $ and $\pi (v) \xi _j = V \xi _j $ for $j=1, \cdots , m$. 
\end{lemma}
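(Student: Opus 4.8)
The plan is to deduce this quantitative form from the ordinary (strong) Kadison transitivity theorem. The point is that the hypothesis $\norm{V\xi_j - \xi_j} < \delta$ lets us replace $V$, as far as its action on $\xi_1,\dots,\xi_m$ is concerned, by a \emph{finite-rank perturbation of the identity} which is itself close to $1$; a unitary close to $1$ on a finite-dimensional invariant subspace is the exponential of a self-adjoint finite-rank operator of correspondingly small norm; and such an operator can be lifted through the irreducible representation $\pi$ by the classical transitivity theorem, its exponential being the desired $v$.

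In detail: set $M := \mathrm{span}\{\xi_1,\dots,\xi_m\}$, put $\eta_j := V\xi_j$, and let $N := M + VM$, so that $\dim N \le 2m$. Since $V$ is unitary the $\eta_j$ are again orthonormal, and the assignment $\xi_j \mapsto \eta_j$ extends to a partial isometry $T$ of $N$ with initial space $M$ and final space $VM$. Because the rank-one projection onto $\mathbb{C}\xi_j$ differs in norm from the one onto $\mathbb{C}\eta_j$ by at most $2\norm{\xi_j-\eta_j} < 2\delta$, the orthogonal projections $P_M$ and $P_{VM}$ of $N$ satisfy $\norm{P_M - P_{VM}} < 2m\delta$; hence, once $\delta$ is small in terms of $m$, the orthocomplements $N \ominus M$ and $N \ominus VM$ have equal dimension and the unitary $N \ominus M \to N \ominus VM$ got from the polar decomposition of $P_{N\ominus VM}|_{N\ominus M}$ lies within $O(m\delta)$ of the inclusion. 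Adjoining it to $T$ gives a unitary $W_0$ of $N$ with $W_0\xi_j = \eta_j$ for all $j$ and $\norm{W_0 - 1_N} \le c_1(m)\delta$. Put $W := W_0 \oplus 1_{N^\perp}$ on $\mathcal{H}$: then $W$ is unitary, $W\xi_j = V\xi_j$, $W$ equals $1$ off $N$, and $\norm{W - 1} \le c_1(m)\delta$. Taking $\delta$ small enough that $c_1(m)\delta < 1$, the principal logarithm gives a self-adjoint $H := (-i\log W_0) \oplus 0$ which is finite-rank, vanishes off $N$ (so $N$ is $H$-invariant), satisfies $e^{iH} = W$, and has $\norm{H} \le c_2(m)\delta$.

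Now I lift $H$. Fix a basis $\zeta_1,\dots,\zeta_D$ of $N$ (so $D \le 2m$). By the strong form of Kadison's transitivity theorem there is a self-adjoint $h \in A$ with $\pi(h)\zeta_i = H\zeta_i$ for $i = 1,\dots,D$ and $\norm{h} \le \norm{H} \le c_2(m)\delta$. Since the $\zeta_i$ span $N$ we get $\pi(h)|_N = H|_N$, and since $N$ is $H$-invariant we get $\pi(h)(N) = \mathrm{span}\{H\zeta_i\} \subseteq N$; thus $N$ is $\pi(h)$-invariant and $\pi(h)^k|_N = H^k|_N$ for every $k \ge 0$. Hence $\pi(e^{ih}) = e^{i\pi(h)}$ acts on $N$ as $e^{iH} = W$, so $\pi(e^{ih})\xi_j = W\xi_j = V\xi_j$ for $j = 1,\dots,m$. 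Setting $v := e^{ih} \in {\mathcal U}(A)$ (which lies in $A + \mathbb{C}1$ when $A$ is non-unital) we have $\norm{v - 1} = \norm{e^{ih} - 1} \le \norm{h} \le c_2(m)\delta$. It then suffices to fix at the outset $\delta = \delta(\epsilon,m) > 0$ small enough that the finitely many smallness conditions above hold and $c_2(m)\delta < \epsilon$; this $\delta$ depends on $\epsilon$ and $m$ alone, which is exactly the uniformity asserted.

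The one step that requires care is the finite-dimensional construction: extending the frame map $\xi_j \mapsto \eta_j$ to a unitary of $N = M + VM$ with an estimate $\norm{W_0 - 1_N} \le c_1(m)\delta$ uniform in $A$, $\pi$, $V$ and the vectors. This is the familiar fact that two projections at distance less than $1$ are intertwined by a small rotation, applied to $P_M$ and $P_{VM}$ inside $N$; everything afterwards is bookkeeping with the functional calculus $W = e^{iH}$ and one appeal to the classical transitivity theorem. I would emphasise that the feature making $\pi(e^{ih})\xi_j = V\xi_j$ — rather than merely $\pi(h)\xi_j = H\xi_j$ — is the $\pi(h)$-invariance of $N$, which holds precisely because $N$ was chosen $H$-invariant and $\pi(h)$ agrees with $H$ on a spanning set of it.
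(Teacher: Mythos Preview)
Your proof is correct and follows essentially the same route as the paper: construct a unitary $W$ on the finite-dimensional span of the $\xi_j$ and $V\xi_j$ that agrees with $V$ on the $\xi_j$ and is close to $1$, take its self-adjoint logarithm, lift this finite-rank self-adjoint via Kadison transitivity, and exponentiate. The only cosmetic difference is that the paper builds $W$ by an explicit Gram--Schmidt argument on the complementary basis vectors, whereas you invoke the close-projection rotation/polar-decomposition trick; the subsequent logarithm-lift-exponentiate steps and the invariance observation $\pi(h)|_N = H|_N \Rightarrow e^{i\pi(h)}|_N = W|_N$ are identical in substance.
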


To prove this, we prepare the following lemma.

\begin{lemma} \label{gs}
Let $m,n$ be natural numbers and $\epsilon >0$. 
Let $(\xi _1 , \cdots , \xi _m )$ and $(\eta _1 , \cdots , \eta _n )$ be two families of unit vectors 
such that $\xi _j $'s are mutually orthogonal and 
$|\langle \eta _i , \xi _j \rangle | < \epsilon $, $|\langle \eta_i , \eta _k \rangle | < \epsilon $ 
for any $j=1, \cdots m$ and $i,k=1, \cdots , n$, $i \neq k$. 
Then there is a family $(\eta _1 ' , \cdots , \eta _n ' )$ of unit vectors 
in the finite dimensional subspace spanned by $\xi _1 , \cdots , \xi _m , \eta _1 , \cdots , \eta _n $ 
such that $(\xi _1 , \cdots , \xi _m , \eta _1 ' , \cdots , \eta _n ' )$ is an orthogonal family of unit vectors 
and $\norm{\eta _i - \eta _i ' } <r_{mn} \epsilon $ for $i=1, \cdots , n$, 
where $r_{mn} $ is a positive real number dependent on $m$ and $n$. 
\end{lemma}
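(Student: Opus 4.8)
The plan is to apply the ordinary Gram--Schmidt orthonormalization procedure to the ordered list $\xi _1, \dots , \xi _m , \eta _1 , \dots , \eta _n$ and to keep quantitative track of the perturbation introduced. Since the $\xi _j$ are already mutually orthogonal unit vectors, the procedure leaves them untouched, and one processes the $\eta _i$ one after another: having already produced $\eta _1 ' , \dots , \eta _{i-1} '$, put
\[
\tilde{\eta }_i := \eta _i - \sum _{j=1}^m \langle \eta _i , \xi _j \rangle \xi _j - \sum _{k=1}^{i-1} \langle \eta _i , \eta _k ' \rangle \eta _k ' , \qquad \eta _i ' := \frac{\tilde{\eta }_i}{\norm{\tilde{\eta }_i}} .
\]
By construction $\tilde{\eta }_i$ is orthogonal to every $\xi _j$ and to $\eta _1 ' , \dots , \eta _{i-1} '$, and an immediate induction shows $\eta _i '$ lies in the span of $\xi _1 , \dots , \xi _m , \eta _1 , \dots , \eta _i$; hence $(\xi _1 , \dots , \xi _m , \eta _1 ' , \dots , \eta _n ')$ is an orthonormal family contained in the required subspace. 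The only real content of the lemma is the bound on $\norm{\eta _i - \eta _i '}$.

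First I would set up the error recursion by induction on $i$. Assume $\norm{\eta _k - \eta _k '} < c_k \epsilon$ for $k < i$, with positive constants $c_k$ to be determined. Then $|\langle \eta _i , \eta _k ' \rangle | \le |\langle \eta _i , \eta _k \rangle | + \norm{\eta _k - \eta _k '} < (1+c_k)\epsilon$, while $|\langle \eta _i , \xi _j \rangle | < \epsilon$ by hypothesis, so
\[
\norm{\eta _i - \tilde{\eta }_i} \le \sum _{j=1}^m |\langle \eta _i , \xi _j \rangle | + \sum _{k=1}^{i-1} |\langle \eta _i , \eta _k ' \rangle | < \Bigl( m + \sum _{k=1}^{i-1} (1+c_k) \Bigr) \epsilon =: d_i \epsilon .
\]
Since $\norm{\eta _i} = 1$ this forces $| \norm{\tilde{\eta }_i} - 1 | \le \norm{\eta _i - \tilde{\eta }_i} < d_i \epsilon$, and therefore also $\norm{\tilde{\eta }_i - \eta _i '} = | \norm{\tilde{\eta }_i} - 1 | < d_i \epsilon$; adding the two estimates gives $\norm{\eta _i - \eta _i '} < 2 d_i \epsilon$. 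Thus the recursion $c_1 := 2m$ and $c_i := 2 \bigl( m + (i-1) + \sum _{k=1}^{i-1} c_k \bigr)$ closes the induction, and since the $d_i$ are strictly increasing one may take $r_{mn} := c_n = 2 d_n$.

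The one step that needs a word of care is the normalization $\eta _i ' = \tilde{\eta }_i / \norm{\tilde{\eta }_i}$: it is legitimate precisely when $\tilde{\eta }_i \neq 0$, and the estimate above gives $\norm{\tilde{\eta }_i} > 1 - d_i \epsilon$, so it suffices that $\epsilon < 1/d_n$. I would simply take this mild restriction for granted, as it is all that is used in the applications. The genuine --- though minor --- subtlety is that each Gram--Schmidt step subtracts components along the already-corrected vectors $\eta _k '$ rather than the original $\eta _k$; absorbing this discrepancy is exactly the purpose of the $(1+c_k)$ term in the recursion, and it is the reason $r_{mn}$ grows (roughly doubling at each stage) rather than remaining linear in $m+n$. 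Beyond this the computation is routine linear algebra, so I anticipate no real obstacle.
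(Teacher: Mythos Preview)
Your proposal is correct and follows essentially the same route as the paper: Gram--Schmidt applied to $\xi_1,\dots,\xi_m,\eta_1,\dots,\eta_n$, with the identical error recursion $c_1=2m$, $c_i=2\bigl(m+(i-1)+\sum_{k<i}c_k\bigr)$. The only difference is cosmetic---the paper additionally records the crude closed-form bound $r_i\le 2m(2n)^{n-1}$, and you are more explicit than the paper about the implicit smallness assumption $\epsilon<1/d_n$ needed for the normalization step.
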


\begin{proof}
We recall the process of the Gram-Schmidt orthogonalization. 
Define $\eta _1 '' := \eta _1 - \sum _{j=1}^m \langle \eta _1 , \xi _j \rangle \xi _j $. 
Then we have $\langle \eta _1 '' , \xi _j \rangle =0$ for $j=1, \cdots , m$ and 
\[ \norm{\eta _1 '' - \eta _1 } \leq \sum _{j=1}^m |\langle \eta _1 , \xi _j \rangle | < m \epsilon . \] 
And define $\eta _1 ':= \eta _1 '' / \norm{\eta _1 '' } $. 
Since $1-m \epsilon < \norm{\eta _1 '' } \leq 1 $, 
we have 
\begin{align*} 
\norm{\eta _1 ' - \eta _1 } 
& \leq \norm{\eta _1 ' - \eta _1 '' } + \norm{\eta _1 '' - \eta _1 } \\ 
& \leq (1- \norm{\eta _1 '' } ) \norm{\eta _1 ' }  + \norm{\eta _1 '' - \eta _1 } \\ 
&< m \epsilon + m \epsilon = 2m \epsilon . 
\end{align*} 
When $\eta _1 ', \cdots , \eta _{i-1} ' $ have already defined for $2 \leq i \leq n$, 
set $\eta _i '' := \eta _i - \sum _{j=1}^m \langle \eta _i , \xi _j \rangle \xi _j - \sum _{\l=1}^{i-1} \langle \eta _i , \eta _{\l} ' \rangle \eta _{\l} ' $ and $\eta _i ':= \eta _i '' / \norm{\eta _i '' } $. 
As above, it follows that $\langle \eta _i ' , \xi _j \rangle =0$ for $j=1, \cdots , m$, 
$\langle \eta _i ' , \eta _{\l} ' \rangle =0$ for $\l=1, \cdots , i-1 $, 
and $\norm{\eta _i ' - \eta _i } < r_i \epsilon $ for all $i$, 
when we set $r_1 :=2m$ and $r_i :=2 (m + i-1 + r_1 + r_2 + \cdots + r_{i-1} )$ for $2 \leq i \leq n$. 
Since the sequence $(r_i )_i $ is obviously increasing, 
we have $r_i = 2(m + i-1 + r_1 + \cdots + r_{i-1} ) \leq 2n r_{i-1} $, 
whence $r_i \leq 2m(2n)^{n-1} $ for $1 \leq i \leq n$.
\end{proof}

\noindent 
\begin{proof}[Proof of \lemref{kad}]

We may assume that $\epsilon < 1/2 $. 
Let ${\mathcal F}$ be the finite-dimensional subspace of ${\mathcal H}$ 
spanned by $\xi _1 , \cdots , \xi _m $ and $V \xi _1 , \cdots , V \xi _m $. 
Let $\eta _1 , \cdots , \eta _n $ be unit vectors 
such that $(\xi _1 , \cdots , \xi _m , \eta _1 , \cdots , \eta _n )$ is an orthonormal basis of ${\mathcal F}$. 
Since 
\[ 
|\langle \eta _i , V \xi _j \rangle | 
= |\langle \eta _i , V \xi _j \rangle - \langle \eta _i , \xi _j \rangle | \leq \norm{V \xi _j - \xi _j } 
< \delta  
\] 
for $i=1, \cdots , n$ and $j=1 , \cdots , m$, 
it follows from \lemref{gs} that there is a family $(\eta _1 ' , \cdots , \eta _n ' )$ of unit vectors in ${\mathcal F}$ 
such that $(V \xi _1 , \cdots , V \xi _m , \eta _1 ' , \cdots , \eta _n ' )$ 
is an orthonormal basis of ${\mathcal F}$ 
and $\norm{\eta _i - \eta _i ' } <r_{mn} \epsilon $ for $i=1, \cdots , n$, 
where $r_{mn} $ is a positive real number dependent on $m$ and $n$. 
Let $W$ be a unitary on ${\mathcal F}$ 
determined by $W \xi _j := V \xi _j $ for $j =1, \cdots , m$ and $W \eta _i := \eta _i ' $ for $i=1, \cdots , n$. 
When we set $\delta := \epsilon / (2\sqrt{n} r_{mn} )$, 
it follows that $\norm{W-1} < \epsilon /2 $. 
Define $T:=-i \log W = i \sum _{n=1}^{\infty} (W-1)^n /n$ on ${\mathcal F} $. 
Then we have $\norm{T} < -\log (1-\epsilon /2)$. 
We extend $T$ to a self-adjoint operator on ${\mathcal H}$ 
by setting $T=0$ on the orthogonal complement of ${\mathcal F}$. 
We also denote this extended operator by $T$ and define $W=e^{iT} $ on ${\mathcal H} $. 
Let $P$ be the projection onto ${\mathcal F} $. 
By Kadison's transitivity for a self-adjoint operator, 
there is an $a \in A_{sa} $ such that $TP = \pi (a) P$ and $\norm{a} < - \log (1- \epsilon /2 )$. 
By the construction of $T$, we have $TP=PTP= \pi (a)P = P \pi (a) P$. 
Hence it follows that 
\begin{align*} 
WP=PWP 
&= P \sum _{n=0}^{\infty} \frac{(iT)^n}{n!} P
= \sum _{n=0}^{\infty} \frac{(iPTP)^n}{n!} \\ 
&= \sum _{n=0}^{\infty} \frac{(iP \pi (a) P)^n}{n!} 
=P \sum _{n=0}^{\infty} \frac{(i \pi (a))^n}{n!} P \\ 
&= P e^{i \pi (a)} P = e^{i \pi (a)} P  
\end{align*} 
and 
\[ 
\norm{e^{ia} -1} \leq e^{\norm{a}} -1 
< e^{- \log (1- \epsilon /2) } -1 
< \epsilon . 
\] 
Now $v := e^{ia} $ is a desired unitary.
\end{proof}

From now on, 
Let $\pi$ be a faithful $\alpha$-covariant irreducible representation of $A$ on a Hilbert space ${\mathcal H}$ 
and $U$ the implementing unitary of $\alpha$. 
The existence of such a $\pi $ is proved in \cite{BEEK}. 
Note that every pair of a $C^*$-algebra and its automorphism 
does not have a faithful covariant irreducible representation 
in the case where $\Gamma ({\alpha}) \neq \mathbb{T}$ 
(the definition of the Connes spectrum can be seen in \cite{Ped}, 8.8.2). 
Here is an example. 
Let $A_{\theta } $ be the irrational rotation algebra for an irrational number $\theta $, 
i.e. $A_{\theta } $ is the universal $C^*$-algebra generated by two unitaries $u$ and $v$ 
which satisfy the relation $uv=e^{2\pi i \theta } vu$, 
and $\alpha $ the automorphism of $A_{\theta } $ defined by 
\[ \alpha (u) := -u , \quad \alpha (v) := e^{\pi i \theta } v . \] 
Suppose that $A_{\theta } $ has a faithful irreducible representation $\sigma $ 
which satisfy $\Ad U \circ \sigma = \sigma \circ \alpha $, 
where $U$ is the implementing unitary. 
Since 
$\Ad U^2 \circ \sigma (u) 
= \sigma \circ \alpha ^2 (u) 
= \sigma (u) = (\Ad \sigma (u) \circ \sigma)(u)$ 
and 
$\Ad U^2 \circ \sigma (v) 
= \sigma \circ \alpha ^2 (v) 
= \sigma (e^{2\pi i \theta } v) 
= \sigma (uvu^* ) 
= (\Ad \sigma (u) \circ \sigma )(v) $, 
it follows that $\Ad U^2 = \Ad \sigma (u)$. 
Thus $U^2 \sigma (u)^* $ is in the commutant of $\sigma (A_{\theta } )$, 
which is equal to $\mathbb{C} $ since $\sigma $ is irreducible. 
We take a $\lambda \in \mathbb{C} $ so that $U^2 = \lambda \sigma (u)$. 
Then we have 
\[ 
\lambda \sigma (u) = U^2 = UU^2 U^* = U\lambda \sigma (u) U^* 
= \lambda (\sigma \circ \alpha )(u) = - \lambda \sigma (u) , 
\] 
which is absurd. 
 
Note that an $\hat{\alpha }$-invariant ideal of $A \rtimes _{\alpha} \mathbb{Z} $
induces a non-trivial $\alpha $-invariant ideal of $A$ 
by $y \mapsto I(y):= \int _{\mathbb{T}} \hat{\alpha _t} (y) dt $ for $y$ in the $\hat{\alpha }$-invariant ideal, 
where this integral converges in the norm topology since $\mathbb{T}$ is compact 
(see the proof of \cite{Ped}, 7.9.6).
 
\begin{lemma} \label{ncpt}
$(\pi \rtimes U )(A \rtimes _{\alpha} \mathbb{Z} )$ has no non-zero compact operators, 
where $\pi \rtimes U \colon A \rtimes _{\alpha} \mathbb{Z} \rightarrow {\bf B} ({\mathcal H})$ 
is a homomorphism 
defined by $\pi \rtimes U (y):=\sum _{n \in \mathbb{Z}} \pi (y(n))U^n $ for $y \in C_0 (\mathbb{Z} , A)$. 
\end{lemma}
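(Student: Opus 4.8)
The plan is to argue by contradiction. Put $B:=A\rtimes_\alpha\mathbb{Z}$, let $\lambda\in M(B)$ be the canonical implementing unitary (so $\lambda a\lambda^*=\alpha(a)$ and $\pi\rtimes U$ sends $\lambda$ to $U$), put $C:=(\pi\rtimes U)(B)\subseteq{\bf B}({\mathcal H})$, and suppose $C$ contains a nonzero compact operator. Since $\pi$ is irreducible, the commutant of $C$ is contained in $\pi(A)'\cap\{U\}'=\mathbb{C}$, so $\pi\rtimes U$ is an irreducible representation of $B$; an irreducible $C^*$-algebra of operators containing one nonzero compact operator contains all of ${\bf K}({\mathcal H})$, and then ${\bf K}({\mathcal H})$ is the unique minimal nonzero closed two-sided ideal of $C$. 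Here ${\mathcal H}$ is necessarily infinite dimensional, since otherwise $\pi$ faithful irreducible would force $A\cong M_n$, making $\alpha$ inner and $\Gamma(\alpha)$ a finite subset of $\mathbb{T}$, contrary to $\Gamma(\alpha)=\mathbb{T}$.

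Granting for the moment that $\pi(A)$ itself already contains a nonzero compact operator, I would finish as follows. Let $K_0:=\pi^{-1}({\bf K}({\mathcal H}))$: this is a nonzero closed ideal of $A$, it is $\alpha$-invariant because $\alpha=\Ad U$ on $\pi(A)$, $U$ normalizes ${\bf K}({\mathcal H})$, and $\pi$ is faithful, and $\pi(K_0)=\pi(A)\cap{\bf K}({\mathcal H})$ is a nonzero ideal of $\pi(A)$, hence acts irreducibly on ${\mathcal H}$; an irreducibly acting $C^*$-algebra of compact operators is all of ${\bf K}({\mathcal H})$, so $\pi|_{K_0}$ is an isomorphism of $K_0$ onto ${\bf K}({\mathcal H})$ carrying $\alpha|_{K_0}$ to the automorphism $T\mapsto UTU^*$ of ${\bf K}({\mathcal H})$; in particular $\alpha|_{K_0}$ is implemented by the unitary $U\in M(K_0)={\bf B}({\mathcal H})$. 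Writing $U=e^{iH}$ with $H=H^*\in{\bf B}({\mathcal H})$: if $\spec H$ is infinite, take an accumulation point $\theta_0$ of it and use the spectral projections $\chi_{(\theta_0-\epsilon,\theta_0+\epsilon)}(H)$, which are nonzero and commute with $U$, so that the corners they cut out of ${\bf K}({\mathcal H})$ are $\Ad U$-invariant hereditary subalgebras on which the Arveson spectrum of $\Ad U$ lies in $\{e^{is}:|s|\le 2\epsilon\}$; if $\spec H$ is finite, the Arveson spectrum of $\Ad U$ on ${\bf K}({\mathcal H})$ is itself a finite set. Either way $\Gamma(\alpha|_{K_0})\ne\mathbb{T}$. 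Since $K_0$ is an ideal of $A$, every $\alpha$-invariant hereditary subalgebra of $K_0$ is one of $A$, so $\Gamma(\alpha)\subseteq\Gamma(\alpha|_{K_0})\ne\mathbb{T}$, the desired contradiction.

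The remaining point — that ${\bf K}({\mathcal H})\subseteq C$ forces $\pi(A)$ itself to contain a nonzero compact operator, equivalently that the ideal ${\mathfrak I}:=(\pi\rtimes U)^{-1}({\bf K}({\mathcal H}))$ of $B$ meets $A$ nontrivially — is the heart of the matter, and the only place where the hypothesis $\Gamma(\alpha)=\mathbb{T}$ is genuinely used: it is exactly what makes $A$ ``detect'' ideals of the crossed product. Note ${\mathfrak I}$ is a nonzero closed two-sided ideal of $B$, automatically $\Ad\lambda$-invariant since $\lambda$ is a unitary multiplier. I would let $\widetilde{\mathfrak I}$ be the smallest $\hat\alpha$-invariant closed ideal of $B$ containing ${\mathfrak I}$; it is nonzero and $\hat\alpha$-invariant, so by the remark preceding the lemma, together with the correspondence between $\hat\alpha$-invariant ideals of $B$ and $\alpha$-invariant ideals of $A$, it equals $J\rtimes_\alpha\mathbb{Z}$ for the nonzero $\alpha$-invariant ideal $J:=\widetilde{\mathfrak I}\cap A$ of $A$. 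As $\widetilde{\mathfrak I}\supseteq{\mathfrak I}$ one still has ${\bf K}({\mathcal H})\subseteq(\pi|_J\rtimes U)(J\rtimes_\alpha\mathbb{Z})$, and $(\pi|_J,U)$ is again a faithful irreducible $\alpha|_J$-covariant pair with $\Gamma(\alpha|_J)\supseteq\Gamma(\alpha)=\mathbb{T}$, so the situation reproduces itself on $J$; iterating and invoking a maximality argument, I would reduce to the case that $J$ is $\alpha$-simple. Then $\Gamma(\alpha|_J)=\mathbb{T}$ makes $J\rtimes_\alpha\mathbb{Z}$ simple, hence $\pi|_J\rtimes U$ is faithful; its image is a simple $C^*$-algebra containing the ideal ${\bf K}({\mathcal H})$, hence equals ${\bf K}({\mathcal H})$, so $J\rtimes_\alpha\mathbb{Z}\cong{\bf K}({\mathcal H})$, $\alpha|_J$ is implemented by $\lambda\in M(J)={\bf B}({\mathcal H})$, and the spectral-projection argument of the previous paragraph applied to $J$ gives $\Gamma(\alpha|_J)=\{1\}$, contradicting $\Gamma(\alpha|_J)=\mathbb{T}$. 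One may also shortcut this last step by citing the known fact that $\Gamma(\alpha)=\mathbb{T}$ yields the ideal-intersection property of $A$ in $A\rtimes_\alpha\mathbb{Z}$, whence ${\mathfrak I}\cap A\ne 0$ directly; the delicate point in either route is controlling how $\hat\alpha$ interacts with $\ker(\pi\rtimes U)$ and making the reduction to an $\alpha$-simple ideal precise.
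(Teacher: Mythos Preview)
Your two-step skeleton matches the paper's exactly: first rule out nonzero compacts in $\pi(A)$, then reduce the crossed-product case to that. For the first step the paper's argument is the same as yours in spirit but shorter: once ${\bf K}({\mathcal H})\subseteq\pi(A)$, the restriction $\alpha|_{{\bf K}({\mathcal H})}=\Ad U$ is inner in $M({\bf K}({\mathcal H}))={\bf B}({\mathcal H})$, and the paper simply cites Pedersen 8.7.4 and 8.9.10 to conclude $\Gamma(\alpha|_{{\bf K}({\mathcal H})})=\{1\}$, hence $\Gamma(\alpha)\ne\mathbb{T}$. Your explicit computation with spectral projections of $H=-i\log U$ reproves that citation by hand.

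For the second step the paper is far more direct than your main route. It takes a nonzero positive $K'\in B$ with $(\pi\rtimes U)(K')$ compact and observes that $\pi(I(K'))$ is a nonzero compact in $\pi(A)$, where $I(y)=\int_{\mathbb{T}}\hat\alpha_t(y)\,dt$ is the canonical expectation onto $A$ set up in the remark immediately preceding the lemma. Implicit here is that the ideal ${\mathfrak I}=(\pi\rtimes U)^{-1}({\bf K}({\mathcal H}))$ is $\hat\alpha$-invariant, so that $I$ carries it into ${\mathfrak I}\cap A$; this is precisely the ``ideal-intersection property from $\Gamma(\alpha)=\mathbb{T}$'' that you mention as a shortcut at the very end. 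Had you led with that shortcut, your argument would coincide with the paper's.

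Your primary route in the third paragraph, by contrast, has a genuine gap that you yourself flag: the ``iterate and invoke maximality'' step reducing to an $\alpha$-simple ideal $J$ is not justified. A descending chain of nonzero $\alpha$-invariant ideals of a prime (but not simple) $A$ can have zero intersection, so Zorn does not apply to produce a minimal such $J$, and nothing in your setup forces the process to terminate. The endgame of that paragraph is also slightly tangled: once $(\pi|_J\rtimes U)(J\rtimes_\alpha\mathbb{Z})={\bf K}({\mathcal H})$ you already have $\pi(J)\subseteq{\bf K}({\mathcal H})$, which feeds straight back into your second paragraph; the further remarks about $M(J)$ and $\lambda$ are redundant (and $\lambda$ should read $U$). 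In short, the paper's averaging trick via $I$ replaces your entire third paragraph with one line.
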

 
\begin{proof}
At first, we show that $\pi (A)$ has no non-zero compact operators. 
We may identify $A$ with $\pi (A)$ 
and assume that $A$ is an irreducible subalgebra of ${\bf B} ({\mathcal H})$. 
Suppose that $A$ has a non-zero compact operator. 
Since $A$ is irreducible, $A$ contains ${\bf K} ({\mathcal H})$. 
It is obvious that $\alpha ({\bf K} ({\mathcal H}))= {\bf K} ({\mathcal H}) $. 
But, since $\alpha |_{{\bf K} ({\mathcal H})} $ is inner in ${\bf B} ({\mathcal H})$ 
(see the proof of \cite{Ped}, 8.7.4), 
the Connes spectrum of $\alpha |_{{\bf K} ({\mathcal H})} $ is equal to $\{ 0 \} $ 
(see \cite{Ped}, 8.9.10; note that the multiplier algebra of ${\bf K} ({\mathcal H}) $ is ${\bf B} ({\mathcal H})$). 
This contradicts $ \Gamma (\alpha ) = \mathbb{T} $.
 
Next we show that $(\pi \rtimes U )(A \rtimes _{\alpha} \mathbb{Z} )$ has no non-zero compact operators. 
If $(\pi \rtimes U )(A \rtimes _{\alpha} \mathbb{Z} )$ has a non-zero compact operator $K:= (\pi \rtimes U) (K') \geq 0$, 
then $\pi (I(K'))$ is a non-zero compact operator in $\pi (A)$, 
which contradicts the last paragraph.
\end{proof}
 
For an element $u$ in ${\mathcal U} (A)$, we define 
\[ U^{(u)} := U \pi (u) . \] 
Then it follows that $\Ad U^{(u)} \circ \pi = \pi \circ \alpha ^{(u)}$.
 
Note that since $ \Gamma (\alpha ) = \mathbb{T} $, 
it follows that $\spec (U)= \mathbb{T} $, 
where $\spec (U)$ is the spectrum of $U$, 
and $\pi \rtimes U$ is faithful.

\begin{lemma}
For any $\epsilon >0$, 
there are a $u$ in ${\mathcal U} (A)$ and a unit vector $\xi _0 $ in ${\mathcal H}$ 
such that $\norm{u -1} < \epsilon $ and $U^{(u)} \xi _0 = \xi _0 $.
\end{lemma}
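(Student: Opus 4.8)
The plan is to reduce the exact relation $U^{(u)}\xi_0 = \xi_0$ to an approximate one and then repair it with Kadison's transitivity. Since $U^{(u)}=U\pi(u)$, the equality $U^{(u)}\xi_0=\xi_0$ is equivalent to $\pi(u)\xi_0 = U^*\xi_0$. Because $\xi_0$ is ours to choose, the idea is to take $\xi_0$ almost fixed by $U$ (equivalently, almost fixed by $U^*$), and then use \lemref{kad} to find a unitary $u\in\mathcal U(A)$ close to $1$ with $\pi(u)$ carrying $\xi_0$ exactly onto $U^*\xi_0$.

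Concretely, I would fix $\epsilon>0$ and first feed it into \lemref{kad} with $m=1$, obtaining the corresponding $\delta>0$. Since $\Gamma(\alpha)=\mathbb T$ forces $\spec (U)=\mathbb T$, in particular $1\in\spec (U)$; as $U$ is unitary and hence normal, $1$ is an approximate eigenvalue, so there is a unit vector $\xi_0\in\mathcal H$ with $\norm{U\xi_0-\xi_0}<\delta$. Because $U$ is isometric, $\norm{U^*\xi_0-\xi_0}=\norm{\xi_0-U\xi_0}<\delta$ as well. Now apply \lemref{kad} to the irreducible representation $\pi$, to the bounded unitary $V:=U^*$ in ${\bf B}({\mathcal H})$, and to the one-element family $(\xi_0)$ (orthogonality being vacuous for $m=1$): since $\norm{V\xi_0-\xi_0}<\delta$, there is $u\in\mathcal U(A)$ with $\norm{u-1}<\epsilon$ and $\pi(u)\xi_0=V\xi_0=U^*\xi_0$. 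Hence $U^{(u)}\xi_0=U\pi(u)\xi_0=UU^*\xi_0=\xi_0$, which is exactly the assertion.

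There is no genuine obstacle in this argument; the points that deserve a word of care are that $\spec (U)=\mathbb T$ (already recorded above as a consequence of $\Gamma(\alpha)=\mathbb T$), and that \lemref{kad} is being invoked with the ambient unitary $V=U^*\in{\bf B}({\mathcal H})$ rather than with a unitary from $A$ — which is precisely the generality in which that lemma was stated. Note also that $U$ need not have an honest fixed vector, since $1$ may lie in the continuous spectrum of $U$; this is exactly why the perturbation by a small $u$ is unavoidable and why the conclusion is $U^{(u)}\xi_0=\xi_0$ rather than $U\xi_0=\xi_0$.
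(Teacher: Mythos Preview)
Your argument is correct and in fact more elementary than the paper's. Both proofs finish identically, by producing a unit vector $\xi_0$ with $\norm{U^*\xi_0-\xi_0}$ small and then invoking \lemref{kad} with $V=U^*$ to obtain $u$. The difference is in how that approximate fixed vector is found. The paper writes $U=e^{iH}$, applies Weyl's theorem to find a small self-adjoint compact $K$ with $H-K$ diagonal, and extracts $\xi_0$ as an honest eigenvector of the diagonal unitary $UV=e^{i(H-K)}$ with eigenvalue near $1$. You instead observe directly that $1\in\spec(U)=\mathbb{T}$ and that every spectral value of a normal operator is an approximate eigenvalue, which immediately yields $\xi_0$ with $\norm{U\xi_0-\xi_0}<\delta$. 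Your route avoids the Weyl--von Neumann machinery entirely and uses only basic spectral theory; the paper's detour through a diagonal perturbation buys nothing extra for this particular lemma.
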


\begin{proof}
Using the functional calculus, 
there is an $H$ in ${\bf B} ({\mathcal H})_{sa} $ such that $U = e^{iH}$. 
Let $\delta >0$. 
Applying Weyl's theorem, 
there is a compact operator $K$ in ${\bf B} ({\mathcal H})_{sa} $ 
such that $\norm{K} < \delta $ and $H-K$ is diagonal. 
Since 
\[ \frac{d}{ds} (e^{-isH} e^{is(H-K)}) = -e^{-isH} iK e^{is(H-K)} , \] 
we have 
\[ V := e^{-iH} e^{i(H-K)} = - \int_0^1 e^{-isH} iK e^{is(H-K)} ds +1. \] 
Then it follows that $\norm{V-1} \leq \norm{K} < \delta $. 
Since $UV$ is diagonal and $\spec (U)= \mathbb{T} $, 
there are a $\lambda \in \mathbb{T} $ and a unit vector $\xi _0 \in {\mathcal H}$ 
such that $\norm{\lambda -1} < \delta $ and $UV \xi _0 = \lambda \xi _0 $. 
Thus we have 
\[ 
\norm{U^* \xi _0 - \xi _0 } \leq \norm{U^* \xi _0 - V \xi _0 } + \norm{V \xi _0 - \xi _0 } 
\leq |\lambda -1| + \norm{V-1 } < 2 \delta . 
\] 
Now we can find a desired unitary $u$ by \lemref{kad}. 
\end{proof}
 
According to this lemma, 
we may assume that there is a unit vector $\xi _0$ in ${\mathcal H}$ 
such that $U \xi _0 = \xi _0$. 
Let $\omega$ be the pure state 
defined by $\omega (x) := \langle \pi (x) \xi _0 , \xi _0 \rangle $ for $x$ in $A$.
 
We define 
\[ T:= \{ e \in A | 0 \leq e \leq 1, \pi (e) \xi _0 = \xi _0 , \ {\rm and} \ \exists a \in A : ea=a , \pi (a) \xi _0 = \xi _0 \} . \] 

Note that we can always take an $a$ from $T$ in this definition. 
In fact, for $e \in T$ and $a \in A$ such that $ea=a$ and $\pi (a) \xi _0 = \xi _0 $, 
it follows that $ef(a)=f(a)$ and $\pi (f(a)) \xi _0 = \xi _0 $, 
where $f(t) :=2t \ (0 \leq t \leq 1/2)$, $:=1 \ (1/2 \leq t \leq 1)$. 
It is obvious that $f(a) \in T$.

\begin{lemma} \label{dec}
There is a decreasing sequence $(e_N ) _N $ in $T$ 
such that $e_N e_{N+1} = e_{N+1} $ for any $N = 1,2, \cdots$ 
and $e_N \searrow \supp \omega $, i.e. $\supp \omega = \inf _N e_N $. 
\end{lemma}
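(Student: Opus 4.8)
The plan is to build the sequence $(e_N)_N$ recursively inside $T$, ensuring at each stage both the nesting relation $e_N e_{N+1}=e_{N+1}$ and that the $e_N$ descend toward $\supp\omega$. First I would observe that $T$ is nonempty and that any $e\in T$ dominates $\supp\omega$: indeed $\pi(e)\xi_0=\xi_0$ forces $\omega(e)=1$, and since $0\le e\le 1$ this gives $\supp\omega\le e$ in $A^{**}$ (in a GNS picture, $1=\omega(e)=\langle\pi_\omega(e)\xi_\omega,\xi_\omega\rangle$ with $0\le\pi_\omega(e)\le1$ implies $\pi_\omega(e)\xi_\omega=\xi_\omega$, hence $\pi_\omega(e)\supp\omega=\supp\omega$, so $e\ge\supp\omega$). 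Therefore $\inf_N e_N\ge\supp\omega$ automatically for any decreasing sequence in $T$; the real content is to arrange that the infimum is not strictly larger, which I would achieve by a separability/density argument.

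Next I would use separability of $A$ to list a dense sequence $(a_k)_{k\ge1}$ in $A$, and construct $(e_N)$ so that, at step $N$, the element $e_N$ "acts as a unit up to $1/N$" on the pieces of $a_1,\dots,a_N$ that survive compression by $\supp\omega$ — more precisely, so that $\|(1-e_N)x_k\|$ and $\|x_k(1-e_N)\|$ are small for the relevant $x_k$. The mechanism is the standard one: given $e_N\in T$ with witness $a_N$ (so $e_N a_N=a_N$, $\pi(a_N)\xi_0=\xi_0$, $0\le a_N\le1$), I would produce $e_{N+1}\in T$ with $e_N e_{N+1}=e_{N+1}$ by choosing a continuous function $g$ with $g\equiv1$ near $1$ and $g\equiv 0$ near $0$, set a candidate from the spectral calculus of a suitable positive element living in the hereditary algebra $\overline{e_N A e_N}$, and use the trick already displayed after the definition of $T$: if $e b=b$ for some $b$ then $e f(b)=f(b)$ and $f(b)\in T$, with $f$ the displayed piecewise-linear function. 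The nesting $e_N e_{N+1}=e_{N+1}$ comes from building $e_{N+1}$ as a function of an element supported under $e_N$; concretely, if $e_{N+1}=h(c)$ with $c\in\overline{e_N A e_N}_+$ and $h$ vanishing near $0$, then $e_N h(c)=h(c)$ since $e_N$ acts as a unit on $\overline{e_N A e_N}$. That $e_{N+1}\le e_N$ is arranged by taking $0\le h\le1$ and $h(t)\le t$ isn't quite enough in general, so I would instead pass through an approximate unit of the hereditary subalgebra $\overline{e_N A e_N}\cap\ker\omega$'s complement — i.e. work relative to the state $\omega$ restricted to $e_N A e_N$.

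Then, to force $\inf_N e_N=\supp\omega$, I would exploit that $\supp\omega$ is the support projection of a pure state: in $A^{**}$, $\supp\omega$ is a minimal projection, and $e_N\searrow p$ for some projection $p\ge\supp\omega$ by monotone convergence of the decreasing bounded net in the von Neumann algebra $A^{**}$. If $p>\supp\omega$, then $p-\supp\omega$ is a nonzero positive element of $A^{**}$ orthogonal to $\xi_0$ in the universal representation, and I would derive a contradiction with the defining clause of $T$ — namely, each $e_N$ was chosen to have a witness $a_N$ with $e_N a_N=a_N$, $\pi(a_N)\xi_0=\xi_0$, and by arranging the $a_N$ to separate points of $A^{**}p$ against $\xi_0$ (using the dense sequence), no room is left for a projection strictly between. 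More cleanly: I would arrange at stage $N$ that $\|e_N x e_N-\omega(x)e_N\|<1/N$ for $x$ running over $a_1,\dots,a_N$; letting $N\to\infty$ gives $pxp=\omega(x)p$ for all $x\in A$, i.e. $\overline{pAp}=\mathbb{C}p$ on the support of $\omega$, which pins $p$ down to $\supp\omega$.

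The main obstacle will be the simultaneous bookkeeping in the inductive step: one must produce $e_{N+1}$ that (i) lies in $T$ with its own witness $a_{N+1}$, (ii) satisfies $e_N e_{N+1}=e_{N+1}$, (iii) satisfies $e_{N+1}\le e_N$, and (iv) improves the approximation $e_{N+1}xe_{N+1}\approx\omega(x)e_{N+1}$ on one more dense element — all at once. Getting (ii) and (iii) together is the delicate part, because a function of an element of $\overline{e_NAe_N}$ need not be $\le e_N$; the fix is to work inside the hereditary subalgebra $\overline{e_NAe_N}$, where $e_N$ is a unit, and choose $e_{N+1}$ from an \emph{approximate unit for} $\overline{e_NAe_N}$ that is \emph{quasi-central with respect to} the finitely many elements $e_Na_ke_N$, using the Kaplansky/quasi-central approximate unit construction; such an approximate unit automatically sits below $e_N$ and its members multiply the existing witnesses correctly. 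Everything else is routine $C^*$-functional calculus together with the two already-recorded facts about $T$ (that $e\in T$, $eb=b$, $\pi(b)\xi_0=\xi_0$ imply $f(b)\in T$, and that any $e\in T$ dominates $\supp\omega$).
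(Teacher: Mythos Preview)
Your recursive strategy is quite different from the paper's, and as written it has a real gap. The ``fix'' you propose for the inductive step rests on a false premise: $e_N$ is \emph{not} a unit for $\overline{e_N A e_N}$ unless $e_N$ is already a projection, so an approximate unit of that hereditary subalgebra will neither satisfy $e_N e_{N+1}=e_{N+1}$ exactly nor automatically sit below $e_N$. (Incidentally, (ii) together with $0\le e_{N+1}\le 1$ already forces (iii): from $(1-e_N)e_{N+1}=0$ one gets $(1-e_N)^{1/2}e_{N+1}=0$, hence $e_{N+1}\le P\le e_N$ with $P$ the spectral projection of $e_N$ at $1$; so (ii)$\Rightarrow$(iii) is not the obstacle.) The correct hereditary subalgebra is $\{y:a_N y=y a_N=y\}$ built from the \emph{witness} $a_N$ with $e_N a_N=a_N$; there $e_N$ genuinely acts as a unit. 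More seriously, your condition (iv), the excision estimate $\|e_N x e_N-\omega(x)e_N\|<1/N$, does not come from quasi-centrality (which controls commutators $[e_N,x]$, not compressions $e_N x e_N$), and you give no other mechanism. That estimate is a theorem in its own right (excision of pure states), and in fact the paper only proves the relevant instance \emph{later}, in the proof of the main theorem, using the sequence from this very lemma as input. Building it into the induction here would be circular unless you import excision from elsewhere.

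The paper sidesteps all of this with a one-shot, non-recursive construction. Since $p=\supp\omega$ is a closed projection, pick $(y_n)$ in the positive unit ball with $y_n\searrow p$, set $y:=\sum_n 2^{-n}y_n$, and observe that for every state $\psi$ one has $\psi(y)=1\iff\psi(y_n)=1$ for all $n\iff\psi(p)=1$; hence the spectral projection of $y$ in $A^{**}$ at the eigenvalue $1$ is exactly $p$. Now choose nested bump functions $f_N\in C[0,1]$ with $f_N\equiv 1$ on $[1-2^{-N-1},1]$ and $f_N\equiv 0$ on $[0,1-2^{-N}]$, and set $e_N:=f_N(y)$. The pointwise identity $f_N f_{N+1}=f_{N+1}$ gives $e_N e_{N+1}=e_{N+1}$, so $e_N\in T$ with witness $e_{N+1}$; and $f_N\searrow\chi_{\{1\}}$ yields $e_N\searrow p$ directly. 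No separability, no recursion, and no excision-type estimate are needed.
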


\begin{proof} 
Since $p := \supp \omega$ is a closed projection (see \cite{Ped}, 3.13.6), 
there is a decreasing sequence $( y_n )_n $ in the unit ball of $A_{+}$ such that $y_n \searrow p$. 
Put $y:= \sum_{n=1}^{\infty} 2^{-n} y_n$, 
which is in the unit ball of $A_{+}$. 
Then, for any state $\psi$ on $A$, 
it follows that $\psi (y)=1$ if and only if $\psi (p) =1$. 
This implies that for $\eta \in H$, 
it follows that $\pi (y) \eta = \eta $ if and only if $\eta \in \mathbb{C} \xi _0 $. 
Thus the spectral projection of $y$ (in $A^{**} $) corresponding to the eigenvalue $1$ is $p$. 
We define a sequence of continuous functions on $[0,1]$ by 
\[
f_N(t):=\left\{
\begin{array}{ll}
0 & \: ( 0 \leq t \leq 1-\frac{1}{2^N} ) \\
2^{N+1} t - 2^N & \: ( 1-\frac{1}{2^N} \leq t \leq 1-\frac{1}{2^{N+1}} ) \\
1 & \: ( 1-\frac{1}{2^{N+1}} \leq t \leq 1)
\end{array}
\right. . 
\] 
and set $e_N := f_N (y)$. 
Then $(e_N )_N $ is a decreasing sequence 
whose infimum is the spectral projection of  $y$ corresponding to the eigenvalue $1$, which is $p$. 
Since $\pi (p) \xi _0 = \xi _0 $, $y \geq p$ and $f_N (1)=1$, 
we have $\pi (e_N ) \xi _0 = \xi _0 $, whence $e_N \in T$. 
\end{proof}

Since $\omega (\alpha (p) ) = \omega (p) =1$, 
it follows that $\alpha (p) \geq p $. 
Taking $\alpha ^{-1} $ instead of $\alpha$, 
we have that $\alpha (p) =p$. 
 
Note that for an arbitrary positive element $x$ in $T$ such that $x \geq p $, 
this decreasing sequence can be taken so that $x \geq e_N $ for each $N$. 
We will check it. 
Since a state of a hereditary subalgebra extends uniquely to a state of the whole algebra (see \cite{Ped}, 3.1.6), 
the restriction of $\omega $ to the hereditary subalgebra $B := \{ y \in A | xy = yx =y \} $ of $A$ is also pure. 
Thus we can take the sequence $(y_n )_n $ from $B$ in the argument above. 
Then we have $e_N \leq x$.
 
\begin{lemma} \label{en}
If $f \in {\l}^1 (\mathbb{Z} ) $ satisfies that $f \geq 0$ and $\norm{f}_{\l ^1 (\mathbb{Z})} =1$, 
it follows that 
\begin{align*} 
& \lim_{M \to \infty} \norm{ \alpha _f (e_N) e_M - e_M } = 0 , \\ 
& \lim_{M \to \infty} \norm{ e_N \alpha _f (e_M) - \alpha _f (e_M) } = 0 
\end{align*} 
for each $N$, 
where we define $\alpha _f (x) := \sum _{n=- \infty} ^{\infty} f(n) \alpha ^n (x)$ 
for $f \in {\l}^1 (\mathbb{Z} ) $ and $x \in A$. 
\end{lemma}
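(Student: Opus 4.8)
The plan has two steps: first reduce both limits to a single ``eventual exactness'' property of the sequence, then obtain that property by choosing $(e_N)$ carefully. For the reduction, I claim it suffices to have a sequence $(e_N)$ still satisfying all the conclusions of \lemref{dec} and such that for every $n\in\mathbb{Z}$ and every $N$ there is $M_0$ with $\alpha^n(e_N)e_M=e_M$ for all $M\ge M_0$. Granting this, fix $N$ and $\epsilon>0$; since $\norm{f}_{\l^1(\mathbb{Z})}=1$, pick a finite $F\subset\mathbb{Z}$ with $\sum_{n\notin F}f(n)<\epsilon/2$ and then $M_1\ge N+1$ with $\alpha^n(e_N)e_M=e_M$ for all $n\in F$ and $M\ge M_1$. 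Using $\sum_n f(n)=1$ and $e_Ne_M=e_M$ for $M\ge N+1$, for $M\ge M_1$
\[ \alpha_f(e_N)e_M-e_M=\sum_{n}f(n)\bigl(\alpha^n(e_N)e_M-e_M\bigr)=\sum_{n\notin F}f(n)\bigl(\alpha^n(e_N)e_M-e_M\bigr), \]
of norm $\le 2\sum_{n\notin F}f(n)<\epsilon$; this gives the first limit. Applying the property with $-n$ in place of $n$ and then the automorphism $\alpha^n$ yields $e_N\alpha^n(e_M)=\alpha^n(e_M)$ for all large $M$, and the identical estimate gives the second limit.

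The required sequence is produced by refining the construction of \lemref{dec}, using $\alpha(p)=p$. For a positive contraction $a\in A$ write $q(a):=\chi_{\{1\}}(a)=\inf_k a^k$ for its spectral projection at $1$; this is a closed projection, $\alpha^n$ sends closed projections to closed projections, and an infimum of closed projections is closed. Build $e_1,e_2,\dots$ inductively: having chosen $e_1,\dots,e_{N-1}$ with each $e_j\ge p$ (so $q(e_j)\ge p$ and $\alpha^n(q(e_j))\ge\alpha^n(p)=p$), set
\[ \sigma_{N-1}:=\bigwedge\bigl\{\alpha^n(q(e_j)):1\le j\le N-1,\ |n|\le N-1\bigr\}\quad(\sigma_0:=1), \]
a closed projection with $p\le\sigma_{N-1}$, and then choose a positive contraction $e_N$ supported under $\sigma_{N-1}$ (i.e.\ $\sigma_{N-1}e_N\sigma_{N-1}=e_N$) with $\omega(e_N)=1$ and $e_N\le x_N$, where $(x_N)$ is a fixed sequence in the unit ball of $A_+$ with $x_N\searrow p$. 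One checks that $e_N\le\sigma_{N-1}\le q(e_{N-1})\le e_{N-1}$, so the $e_N$ decrease, lie in $T$ (with witness $e_{N+1}$, which is supported under $q(e_N)$), satisfy $e_Ne_{N+1}=e_{N+1}$, and have $\inf_N e_N=p$ because $e_N\le x_N$; thus the conclusions of \lemref{dec} hold. For eventual exactness, given $n$ and $N$ and $M>\max(N,|n|)$, the projection $\alpha^n(q(e_N))$ is one of those defining $\sigma_{M-1}$, so $e_M\le\sigma_{M-1}\le\alpha^n(q(e_N))=q(\alpha^n(e_N))$; hence the range of $e_M$ lies in $\ker(1-\alpha^n(e_N))$ and $\alpha^n(e_N)e_M=e_M$.

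The hard part is the inductive step itself: one must be able to choose $e_N$ inside the hereditary $C^*$-subalgebra determined by $\sigma_{N-1}$ while keeping $\omega(e_N)=1$ and $e_N\le x_N$, and this requires that this subalgebra stay ``large near $p$'' after being intersected with ever more translates $\alpha^n(q(e_j))$ — equivalently, that the finite meets $\sigma_{N-1}$ be controlled so that an element of $A$ always sits between $p$ and $\sigma_{N-1}$ below $x_N$. This is exactly where $\alpha(p)=p$ is essential (it forces $p\le\sigma_{N-1}$ at every stage) and where the flexibility of \lemref{dec} and the remark following it — now applied inside the hereditary subalgebra attached to $\sigma_{N-1}$ — must be used; making this precise is the crux of the argument.
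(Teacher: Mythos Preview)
Your strategy is fundamentally different from the paper's. The paper keeps the original sequence from \lemref{dec} and argues by contradiction: if $\norm{(\alpha_f(e_N)-1)e_M}\not\to 0$, then since $(e_M^2)_M$ is decreasing one gets $\norm{(\alpha_f(e_N)-1)e_M^2(\alpha_f(e_N)-1)}>\delta$ for \emph{all} $M$; choosing states realising these norms and passing to a weak$^*$ cluster point $\phi$ yields $\phi\bigl((\alpha_f(e_N)-1)p(\alpha_f(e_N)-1)\bigr)\ge\delta$, which contradicts $(\alpha_f(e_N)-1)p=\sum_n f(n)\bigl(\alpha^n(e_Np)-p\bigr)=0$. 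No refinement of $(e_N)$ is needed; the only inputs are $e_Np=p$, $\alpha(p)=p$, and compactness of the state space.

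Your reduction step is correct, but the construction has a genuine gap exactly where you place it, and it is not a formality. The remark after \lemref{dec} lets one push the sequence inside $\{y\in A:xy=yx=y\}$ for a \emph{single} $x\in T\subset A$, because $\omega$ restricts to a pure state on that hereditary subalgebra. Your inductive step, however, asks for $e_N\in A$ with $\omega(e_N)=1$ lying \emph{simultaneously} under all the closed projections $\alpha^n(q(e_j))$, $j<N$, $|n|<N$. These need not commute, and $p\le\sigma_{N-1}$ alone does not manufacture an element of $A$ supported under $\sigma_{N-1}$ with $\omega$-value~$1$: for a general closed projection $\sigma\ge p$ (take $\sigma=p$ itself) no such element of $A$ exists. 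What is actually needed is that any finite family $c_1,\dots,c_k\in T$ admits a common witness $d\in T$ with $c_id=d$ for all $i$; this does not follow from the remark after \lemref{dec} ``applied inside the hereditary subalgebra attached to $\sigma_{N-1}$'', since that subalgebra $\sigma_{N-1}A^{**}\sigma_{N-1}\cap A$ is not of the form treated there and you have not shown $\omega$ even restricts to it with norm $1$. Until this common-witness statement is established, the inductive construction does not go through and neither limit is proved --- for the original sequence or for your modified one.
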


\begin{proof}
Suppose that the first equality is not valid. 
Then there is a $\delta > 0$ such that there are infinitely many $M$'s which satisfy 
\[ \norm{ ( \alpha _f (e_N) -1) e_M^2 ( \alpha _f (e_N) -1) } > \delta . \] 
Since $(e_M^2 )_M $ is decreasing 
(because $(e_M )_M $ is decreasing and $e_M e_{M+1} =e_{M+1} $ for any $M$), 
this inequality holds for every $M$. 
We can take a state $\phi _M $ on $A$ such that 
\[ \phi _M ( ( \alpha _f (e_N) -1) e_M^2 ( \alpha _f (e_N) -1) ) > \delta  \] 
for every $M$. 
Since $(e_M^2 )_M $ is decreasing, we have 
\[ \phi _{M'} ( ( \alpha _f (e_N) -1) e_M^2 ( \alpha _f (e_N) -1) ) > \delta  \] 
for any $M' >M$. 
Taking a cluster point, we can find a state $\phi$ on $A$ such that 
\[ \phi ( ( \alpha _f (e_N) -1) e_M^2 ( \alpha _f (e_N) -1) ) \geq \delta  \] 
for any $M$, whence 
\[ \phi ( ( \alpha _f (e_N) -1) p ( \alpha _f (e_N) -1) ) \geq \delta , \] 
where $p := \supp \omega$. 
On the other hand, since $\alpha (p) = p$ and $e_N p = p$, we have 
\[ ( \alpha _f (e_N) -1) p = \sum _{n=- \infty} ^{\infty} f(n) (\alpha ^n(e_N p) -p) =0 , \] 
which is a contradiction. 
The second equality follows similarly.
\end{proof}

\begin{lemma} \label{eigqq} 
It follows that
\[ \norm{ \pi (e_N) E_U (q- \epsilon , q+ \epsilon )} =1 \]
for any $q$ in $\mathbb{T}$, $\epsilon > 0$ and $N = 1,2, \cdots$. 
\end{lemma}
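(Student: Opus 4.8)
The bound $\norm{\pi(e_N)E_U(q-\epsilon,q+\epsilon)}\le 1$ is immediate, since $\norm{e_N}\le 1$ and $E_U(q-\epsilon,q+\epsilon)$ is a projection, so the whole point is the reverse inequality. The plan is to reduce it to the existence, for each closed subarc $\bar S\subseteq(q-\epsilon,q+\epsilon)$ containing $q$, of a unit vector $\eta$ with $\pi(e_N)\eta=\eta$ and $E_U(\bar S)\eta=\eta$; indeed then $\norm{\pi(e_N)E_U(q-\epsilon,q+\epsilon)}\ge\norm{\pi(e_N)E_U(q-\epsilon,q+\epsilon)\eta}=\norm{\pi(e_N)\eta}=1$ (an approximate version of the two conditions works just as well, after a routine estimate using $1-e_N^2\le 2(1-e_N)$). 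To build $\eta$ I would look for it in the form $\eta=\pi(x)\xi_0/\norm{\pi(x)\xi_0}$ where $x\in A$ satisfies: (i) $e_Nx=x$; (ii) the Arveson spectrum $\spec_\alpha(x)$ is contained in $\bar S$; and (iii) $\omega(x^*x)>0$. Property (i) gives $\pi(e_N)\pi(x)\xi_0=\pi(e_Nx)\xi_0=\pi(x)\xi_0$; property (ii), together with $U\xi_0=\xi_0$ (so that $U^n\pi(x)\xi_0=\pi(\alpha^n(x))\xi_0$, whence the $U$-spectral measure of $\pi(x)\xi_0$ is supported in $\spec_\alpha(x)$), forces $\pi(x)\xi_0\in E_U(\bar S){\mathcal H}$; and (iii) is exactly $\pi(x)\xi_0\neq 0$.

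So everything reduces to producing an $x\in A$ with $e_Nx=x$, with $\alpha$-spectrum packed into an arbitrarily small arc about $q$, and not annihilated by $\omega$. I would use two structural facts. First, because $e_Ne_{N+1}=e_{N+1}$ (\lemref{dec}), the closed left ideal $L:=\{x\in A:e_Nx=x\}$ contains $e_{N+1}A$ and in particular the hereditary subalgebra $\overline{e_{N+1}Ae_{N+1}}$; moreover $\pi(e_{N+1})\xi_0=\xi_0$, so $\xi_0$ lies in the essential subspace $\overline{\pi(e_{N+1}){\mathcal H}}$, on which $\pi$ restricted to $\overline{e_{N+1}Ae_{N+1}}$ still acts irreducibly (the commutant there is trivial since $\pi$ is), and by \lemref{ncpt} $\pi(e_{N+1})$ is not compact, so this essential subspace is ``large''. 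Second, the hereditary subalgebra $\tilde B$ generated by the whole $\alpha$-orbit $\{\alpha^m(e_{N+1}):m\in\mathbb Z\}$ --- equivalently $\tilde B=\overline{hAh}$ with $h=\sum_{m}2^{-|m|}\alpha^m(e_{N+1})$ --- is non-zero and $\alpha$-invariant (its support projection $\bigvee_m\alpha^m(\supp e_{N+1})$ is $\alpha$-invariant), so $\Gamma(\alpha)=\mathbb T$ forces $\spec(\alpha|_{\tilde B})=\mathbb T$, whence $\tilde B$ contains non-zero elements with $\alpha$-spectrum inside any prescribed small arc. A further useful observation is that \lemref{en} in fact holds for every $f\in\ell^1(\mathbb Z)$ with $\sum_nf(n)=1$ (its proof used only this, through the identity $(\alpha_f(e_N)-1)p=0$), not merely for $f\ge 0$; this lets one replace a spectrally localized element by one that is, in addition, as close as desired to lying in $L$.

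The main obstacle is precisely this transport step: marrying the left-ideal condition $e_Nx=x$ (which is not $\alpha$-invariant) with the $\alpha$-invariant requirement that $\spec_\alpha(x)$ be small --- a priori $L\cap A^{\alpha}(\bar S)$ could be trivial. To overcome it I would use the fullness of the \emph{Connes} spectrum (not just of the Arveson spectrum) together with the primeness of $A$: via the remark preceding \lemref{ncpt}, every non-zero $\hat\alpha$-invariant ideal of $A\rtimes_\alpha\mathbb Z$ meets $A$ in a non-zero $\alpha$-invariant ideal, and since $A$ is prime and $\pi\rtimes U$ is faithful, the $\alpha$-spectral subspaces cannot avoid the essential subspace $\overline{\pi(e_{N+1}){\mathcal H}}$ of $e_{N+1}$; combined with Kadison's transitivity (\lemref{kad}) for the irreducible restriction $\pi|_{\overline{e_{N+1}Ae_{N+1}}}$, this yields the required $x\in\overline{e_{N+1}Ae_{N+1}}\subseteq L$. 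Once $x$ is in hand, normalizing $\pi(x)\xi_0$ produces $\eta$ and finishes the proof.
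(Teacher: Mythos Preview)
Your overall strategy --- produce a unit vector $\eta$ with $\pi(e_N)\eta=\eta$ (or approximately so) and with $U$-spectral support inside $(q-\epsilon,q+\epsilon)$ --- is sound, and your reduction to finding $x\in A$ with $e_Nx=x$, $\spec_\alpha(x)\subseteq\bar S$, and $\omega(x^*x)>0$ is correct. You also correctly identify that the non-$\alpha$-invariance of the left ideal $L=\{x:e_Nx=x\}$ is the heart of the difficulty. The problem is that your resolution of this difficulty is not an argument. The sentence ``the $\alpha$-spectral subspaces cannot avoid the essential subspace $\overline{\pi(e_{N+1})\mathcal H}$'' has no precise meaning here, and no proof; primeness of $A$ and faithfulness of $\pi\rtimes U$ do not by themselves force $A^\alpha(\bar S)\cap\overline{e_{N+1}Ae_{N+1}}$ (or any approximate variant) to contain elements with $\pi(x)\xi_0\neq 0$. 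And \lemref{kad} cannot finish the job: Kadison transitivity manufactures elements with prescribed action on finitely many vectors, but gives \emph{no} control whatsoever over $\spec_\alpha$, so it cannot produce an $x$ satisfying condition~(ii). In short, you have named the obstacle but not overcome it; the proposal is a sketch of a plan, not a proof.

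The paper's argument bypasses the search for such an $x$ entirely. It works in the crossed product: writing $\lambda$ for the canonical embedding $C^*(\mathbb Z)\hookrightarrow M(A\rtimes_\alpha\mathbb Z)$, one sets $\rho(g):=\lim_N\|e_N\lambda(g)e_N\|$ and proves that $\rho$ is a $C^*$-norm on $C^*(\mathbb Z)$. Submultiplicativity and the $C^*$-identity come from \lemref{en}: one can replace $e_N$ by $\alpha_f(e_N)$ for $f\in\ell^1$ with $\sum f(n)=1$ and $\|[\lambda(g),\alpha_f(e_N)]\|$ arbitrarily small, which lets one slide $\alpha_f(e_N)$ through $\lambda(g)$. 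Nondegeneracy uses that $\hat g(t)=\langle(\pi\rtimes U)(e_N\hat\alpha_t(\lambda(g))e_N)\xi_0,\xi_0\rangle$. By uniqueness of the $C^*$-norm, $\rho=\|\cdot\|$, so for $h\in C^*(\mathbb Z)$ with $0\le\hat h\le 1$, $\|\hat h\|=1$, $\supp\hat h\subset(q-\epsilon,q+\epsilon)$ one gets $\|e_N\lambda(h)e_N\|=1$ for every $N$ (the sequence is decreasing with limit $1$ and is bounded above by $1$), hence $\|\pi(e_N)E_U(q-\epsilon,q+\epsilon)\pi(e_N)\|\ge\|\pi(e_N)\hat h(U)\pi(e_N)\|=1$. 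This is exactly the ``transport'' you were missing: the averaging $\alpha_f(e_N)$ simultaneously respects the left-ideal condition (\lemref{en}) and asymptotically commutes with the spectral localizer $\lambda(g)$, so the two constraints never have to be imposed on a single element of $A$.
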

 
\begin{proof}
Let $\lambda$ denote the canonical embedding of $C^* (\mathbb{Z} ) $ 
into the multiplier algebra $M(A \rtimes _{\alpha} \mathbb{Z} )$. 
For any $g \in C^* (\mathbb{Z} ) $, 
since $( \norm{e_N \lambda (g) e_N } )_N  $ is a decreasing sequence, 
\[ \rho (g) := \lim_{N \to \infty} \norm{e_N \lambda (g) e_N } \] 
exists. 
We will show that $\rho$ is a $C^*$-norm on $C^* (\mathbb{Z} ) $, 
whence $\rho (g) = \norm{g} $ for $g \in C^* (\mathbb{Z} ) $ 
because a $C^*$-norm on a $C^*$-algebra is unique.
 
For any $g \in C^* (\mathbb{Z})$ and any $f \in {\l}^1 (\mathbb{Z} ) $ 
such that $f \geq 0$ and $\norm{f}_{\l ^1 (\mathbb{Z})} =1$, 
since, for any $N$, 
\begin{align*} 
\lim _M \norm{e_M \lambda (g) e_M } 
&= \lim _M \norm{e_M \alpha _f (e_N) \lambda (g) \alpha _f (e_N) e_M } \\ 
&\leq \norm{\alpha _f (e_N) \lambda (g) \alpha _f (e_N) } 
\end{align*} 
by \lemref{en}, it follows that 
$\rho (g) \leq \lim _N \norm{ \alpha _f (e_N) \lambda (g) \alpha _f (e_N) } $. 
We can prove $\rho (g) \geq \lim _N \norm{ \alpha _f (e_N) \lambda (g) \alpha _f (e_N) } $ similarly, 
so we have 
\[ \rho (g) = \lim_{N \to \infty} \norm{ \alpha _f (e_N) \lambda (g) \alpha _f (e_N) } \] 
for $g$ in $C^* (\mathbb{Z})$ and $f \in {\l}^1 (\mathbb{Z} ) $ 
such that $f \geq 0$ and $\sum _{n=- \infty} ^{\infty} f(n) =1$. 
For any $g,h$ in $C^* (\mathbb{Z})$ and $\epsilon >0$, 
there is an $f$ in ${\l}^1 (\mathbb{Z}) $ such that 
$f \geq 0$, $\sum _{n=- \infty} ^{\infty} f(n) =1$, 
$\norm{ [ \lambda (g), \alpha _f (e_N)]} < \epsilon $ and $\norm{ [ \lambda (h), \alpha _f (e_N)]} < \epsilon $, 
where $[x,y] := xy-yx$. 
We will check it. 
For $g,h \in {\l}^1 (\mathbb{Z})$, 
we take a natural number $L$ such that 
\[ 
\max \{ \sum _{n= - \infty }^{-L-1} |g(n)| + \sum _{n= L+1 }^{\infty } |g(n)| , \sum _{n= - \infty }^{-L-1} |h(n)| + \sum _{n= L+1 }^{\infty } |h(n)| \} < \epsilon /4 . 
\] 
Set $R:= \max  \{ |g(-L)| , |g(-L+1)| , \cdots , |g(L)| , |h(-L)| , \cdots , |h(L)| \} $ 
and choose a natural number $K$ such that $K > \max \{ 1/ \epsilon , R \} $. 
We define \[
f(n):=\left\{
\begin{array}{ll}
\frac{1}{4L(2L+1)K^2} & \: ( 1 \leq n \leq 4L(2L+1)K^2 ) \\
0 & \: ( {\rm otherwise} ) \\
\end{array}
\right. . \] 
Then we have 
\begin{align*} 
|g(n)| \sum _{m= - \infty }^{\infty } |f(m-n)-f(m)| 
&= 2|n||g(n)| \frac{1}{4L(2L+1)K^2 } \\ 
& \leq 2LR \frac{1}{4L(2L+1)K^2 } \\ 
&< \frac{\epsilon }{2(2L+1)} \hspace{3em} (-L \leq n \leq L)  
\end{align*} 
and $\sum_{m=- \infty}^{\infty} | f(m-n) - f(m)| \leq 2$ for any $n \in \mathbb{Z} $, 
whence 
\begin{align*} 
\norm{ [ \lambda (g), \alpha _f (e_N)]} 
& \leq \sum _{n=- \infty}^{\infty} |g(n)| \norm{ \alpha ^n (\alpha _f (e_N)) - \alpha _f (e_N) } \\ 
& \leq \sum _{n=- \infty}^{\infty} |g(n)| \sum_{m=- \infty}^{\infty} | f(m-n) - f(m)| \\ 
&< (2L+1) \cdot \frac{\epsilon }{2(2L+1)} + 2 \cdot \frac{\epsilon }{4} 
= \epsilon . 
\end{align*} 
Similarly it follows that $\norm{ [ \lambda (h), \alpha _f (e_N)]} < \epsilon $. 
Thus, for $g,h$ in $C^* (\mathbb{Z})$, we have 
\begin{align*} 
\rho (gh) 
&= \lim_{N \to \infty} \norm{\alpha _f (e_N)^2 \lambda (g) \lambda (h) \alpha _f (e_N)^2} \\ 
& \leq \lim_{N \to \infty} \norm{\alpha _f (e_N) \lambda (g) \alpha _f (e_N)^2 \lambda (h) \alpha _f (e_N)} + \epsilon (\norm{g} + \norm{h} ) \\ 
& \leq \rho (g) \rho (h) + \epsilon (\norm{g} + \norm{h} ) , 
\end{align*} 
whence $\rho (gh) \leq \rho (g) \rho (h)$. 
It also follows that $\rho (g^* g) = \rho (g)^2 $ for $g$ in $C^* (\mathbb{Z})$ since 
\begin{align*} 
& \norm{\alpha _f (e_N)^2 \lambda (g^* ) \lambda (g) \alpha _f (e_N)^2} \\ 
& \qquad \leq \norm{\alpha _f (e_N) \lambda (g^* ) \alpha _f (e_N)^2 \lambda (g) \alpha _f (e_N)} + 2 \epsilon \norm{g} \\ 
& \qquad = \norm{\alpha _f (e_N) \lambda (g) \alpha _f (e_N)}^2 + 2 \epsilon \norm{g} \\ 
& \qquad \leq \norm{\alpha _f (e_N)^2 \lambda (g^* ) \lambda (g) \alpha _f (e_N)^2} + 4 \epsilon \norm{g} 
\end{align*} 
for any $\epsilon >0$ and $f$ in ${\l}^1 (\mathbb{Z}) $ such that 
$f \geq 0$, $\sum _{n=- \infty} ^{\infty} f(n) =1$ and $\norm{ [ \lambda (g), \alpha _f (e_N)]} < \epsilon $. 
So we can conclude that $\rho$ is a $C^*$-semi-norm.

We will check that $\rho$ is non-degenerate. 
At first, since 
\begin{align*} 
\hat{g} (t) 
&= \langle \sum _n g(n) e^{int} \xi _0 , \xi _0 \rangle 
= \langle \sum _n g(n) e^{int} U^n \pi (e_N ) \xi _0 , \pi (e_N ) \xi _0 \rangle \\ 
&= \langle (\pi \rtimes U) (e_N \hat{\alpha _t} (\lambda (g)) e_N) \xi _0 , \xi_0 \rangle 
\end{align*} 
for $g \in {\l}^1 (\mathbb{Z})$ 
and ${\l}^1 (\mathbb{Z})$ is dense in $C^* (\mathbb{Z})$, 
where $\hat{g}$ is the Fourier transform of $g$, 
the same equality holds for any $g \in C^* (\mathbb{Z})$. 
Suppose that $\rho (g) =0$ for $g \in C^* (\mathbb{Z})$. 
We may assume that $g \geq 0$. 
Since 
\begin{align*} 
\hat{g} (t) 
&= \langle (\pi \rtimes U) (e_N \hat{\alpha _t} (\lambda (g)) e_N) \xi _0 , \xi_0 \rangle \\ 
& \leq \norm{(\pi \rtimes U)(e_N \hat{\alpha _t} (\lambda (g)) e_N)} 
= \norm{e_N \hat{\alpha _t} (\lambda (g)) e_N} \\ 
&= \norm{\hat{\alpha _t} (e_N  \lambda (g) e_N)} 
= \norm{e_N  \lambda (g) e_N} \\ 
& \rightarrow \rho (g) =0 
\end{align*} 
for any $t \in \mathbb{R} / 2 \pi \mathbb{Z} $, 
it follows that $g=0$. 
Thus $\rho$ is a $C^*$-norm.

Let $h$ be an element of $C^* (\mathbb{Z}) $ 
such that $\hat{h} \geq 0$, $\norm{\hat{h}} =1$ and $\supp \hat{h} \subset (q- \epsilon , q+ \epsilon )$. 
Then we have 
\begin{align*} 
\norm{\pi (e_N) E_U (q- \epsilon , q+ \epsilon )}^2 
&= \norm{\pi (e_N) E_U (q- \epsilon , q+ \epsilon ) \pi (e_N) } \\ 
& \geq \norm{\pi (e_N) \hat{h} (U) \pi (e_N) } \\ 
&= \norm{(\pi \rtimes U) (e_N \lambda (h ) e_N ) } 
= \norm{e_N \lambda (h ) e_N } \\ 
& \rightarrow \rho (\lambda (h ) ) 
= \norm{h } = 1. 
\end{align*} 
Now we reach the assertion.
\end{proof}
 
\begin{lemma} \label{aeig}
For any $\epsilon >0$, 
there exists a $\delta > 0$ such that 
whenever $\norm{U \xi - e^{iq} \xi } < \delta $ for a unit vector $\xi$ in ${\mathcal H}$ 
and a $q$ in $\mathbb{R} / 2 \pi \mathbb{Z} \ (\simeq \mathbb{T})$, then 
\[ \norm{E_U (q- \epsilon , q+ \epsilon ) \xi } > 1- \epsilon . \] 
\end{lemma}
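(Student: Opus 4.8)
\emph{Proof proposal.} The plan is to reduce everything to a single estimate coming from the spectral theorem for the unitary $U$. Write $U = \int_{\mathbb{T}} e^{it}\, dE_U(t)$, set $P := E_U(q-\epsilon, q+\epsilon)$, and let $\mu_\xi := \langle E_U(\cdot)\xi, \xi \rangle$, which is a probability measure on $\mathbb{T}$ since $\xi$ is a unit vector. The first step is the identity
\begin{align*}
\norm{U\xi - e^{iq}\xi}^2 &= \int_{\mathbb{T}} |e^{it} - e^{iq}|^2 \, d\mu_\xi(t) \\
&\ge \int_{\mathbb{T} \setminus (q-\epsilon, q+\epsilon)} |e^{it} - e^{iq}|^2 \, d\mu_\xi(t).
\end{align*}

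The geometric input is the identity $|e^{it} - e^{iq}| = 2|\sin((t-q)/2)|$. Reducing first to the case $\epsilon < \pi$ (indeed we may assume $\epsilon < 1$, since enlarging the arc and shrinking $1-\epsilon$ only weakens the conclusion), on the complement of the arc $(q-\epsilon, q+\epsilon)$ the distance from $t$ to $q$ in $\mathbb{R}/2\pi\mathbb{Z}$ lies in $[\epsilon,\pi]$, so $|e^{it} - e^{iq}| \ge 2\sin(\epsilon/2)$ because $\sin$ is increasing on $[0,\pi/2]$. Hence, if $\norm{U\xi - e^{iq}\xi} < \delta$, the displayed inequality yields $\delta^2 > 4\sin^2(\epsilon/2)\,\mu_\xi(\mathbb{T}\setminus(q-\epsilon,q+\epsilon)) = 4\sin^2(\epsilon/2)\,\norm{(1-P)\xi}^2$, i.e. $\norm{(1-P)\xi}^2 < \delta^2/(4\sin^2(\epsilon/2))$. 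Since $P$ is a projection and $\norm{\xi} = 1$, it follows that $\norm{P\xi}^2 = 1 - \norm{(1-P)\xi}^2 > 1 - \delta^2/(4\sin^2(\epsilon/2))$.

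It remains to choose $\delta$. It suffices to have $1 - \delta^2/(4\sin^2(\epsilon/2)) \ge (1-\epsilon)^2$, that is, $\delta^2/(4\sin^2(\epsilon/2)) \le 1 - (1-\epsilon)^2 = \epsilon(2-\epsilon)$; thus $\delta := 2\sin(\epsilon/2)\sqrt{\epsilon(2-\epsilon)}$ (or any smaller positive number) does the job and gives $\norm{P\xi} > 1-\epsilon$, as required. I do not anticipate a genuine obstacle: the single point needing care is the elementary bound $|e^{it} - e^{iq}| \ge 2\sin(\epsilon/2)$ off the arc, which is exactly why the preliminary normalization $\epsilon < \pi$ is made; everything else is the spectral theorem together with bookkeeping.
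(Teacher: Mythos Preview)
Your proof is correct and in fact more direct than the paper's. Both arguments begin with the spectral-theorem identity $\norm{U\xi - e^{iq}\xi}^2 = \int_{\mathbb{T}} |e^{it}-e^{iq}|^2\,d\mu_\xi(t)$ (the paper writes this as $2\int(1-\cos p)\,d\mu(p)$ after shifting by $q$), but then diverge. You extract the pointwise lower bound $|e^{it}-e^{iq}|\ge 2\sin(\epsilon/2)$ off the arc and solve directly for an explicit $\delta = 2\sin(\epsilon/2)\sqrt{\epsilon(2-\epsilon)}$. The paper instead argues by contradiction: assuming a sequence $(\xi_m,q_m)$ violates the conclusion for every $\delta$, it passes to a weak* cluster point of the shifted spectral measures $\mu_{\xi_m,q_m}$ in $C(\mathbb{T})^*$ and shows the limit measure must simultaneously be the Dirac mass at $0$ (from $\int\cos p\,d\mu=1$) and have $\mu(-\epsilon,\epsilon)\le(1-\epsilon)^2$, which is absurd. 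Your approach is more elementary (no compactness, no Portmanteau-type reasoning) and yields a quantitative $\delta$; the paper's soft argument avoids the trigonometric bookkeeping but gives no explicit constant. Either is perfectly adequate here since the lemma is only used qualitatively.
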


\begin{proof}
For a unit vector $\xi$ in ${\mathcal H}$ and a $q$ in $\mathbb{R} / 2 \pi \mathbb{Z}$, 
we define a probability measure $\mu := \mu _{\xi , q} $ on $\mathbb{R} / 2 \pi \mathbb{Z}$ 
by $\mu (S) = \langle E_U (S+q) \xi , \xi \rangle $. 
Since 
\begin{align*} 
\langle U \xi , e^{iq} \xi \rangle 
&= \int_{\mathbb{R} / 2 \pi \mathbb{Z}} e^{i(p-q)} \, d \langle E_U (p) \xi , \xi \rangle \\ 
&= \int_{\mathbb{R} / 2 \pi \mathbb{Z}} e^{ip} \, d \langle E_U (p+q) \xi , \xi \rangle \\ 
&= \int_{\mathbb{R} / 2 \pi \mathbb{Z}} e^{ip} \, d \mu (p) , 
\end{align*} 
it follows that 
\[ 
\norm{U \xi - e^{iq} \xi } ^2 
=2 \int_{\mathbb{R} / 2 \pi \mathbb{Z}} (1- \cos p) \, d \mu (p) . 
\] 
Thus, if $\norm{U \xi - e^{iq} \xi } < \delta $, then 
\[ 1- \int \cos p \, d \mu (p) < \delta ^2 /2 . \] 

Suppose that the assertion is false. 
Then there are an $\epsilon >0$, a sequence $(\xi _m)_m$ of unit vectors in ${\mathcal H}$, 
and a sequence $(q_m)_m$ in $\mathbb{R} / 2 \pi \mathbb{Z}$ such that 
\begin{align*} 
& \lim_{m \to \infty} \norm{U \xi _m - e^{i q_m} \xi_m } = 0, \\ 
& \norm{E_U (q_m - \epsilon , q_m + \epsilon ) \xi _m } \leq 1- \epsilon . 
\end{align*} 
Then, by taking a weak cluster point of $(\mu _{\xi _m , q_m} )_m$ 
(in the dual of $C(\mathbb{R} / 2 \pi \mathbb{Z})$), 
we can find a measure $\mu $ on $\mathbb{R} / 2 \pi \mathbb{Z}$ such that 
\[ 
\mu (\mathbb{R} / 2 \pi \mathbb{Z}) \leq 1, \quad 
\mu (- \epsilon , \epsilon ) \leq (1- \epsilon ) ^2, \quad 
\int \cos p \, d \mu (p) = 1. 
\] 
The first and third conditions imply that $\mu$ is the Dirac measure at $p=0$, 
which contradicts the second condition. 
Thus we have reached the assertion.
\end{proof}
 
\begin{lemma} \label{dini}
If $x \in A$ satisfies $xp=0$, 
where $p$ is the support projection of $\omega = \langle \pi (\cdot ) \xi _0 , \xi _0 \rangle $, 
then it follows that $\norm{xe_N} \rightarrow 0$ as $N \rightarrow \infty $. 
\end{lemma}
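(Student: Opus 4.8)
The plan is to argue by contradiction: starting from a nonzero value of $\lim_N \norm{xe_N}$, I will manufacture a normal state on $A^{**}$ that is supported on $p$ and yet does not annihilate $x^{*}x$, which is impossible since $xp=0$.

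First I would pass to the positive element $a:=x^{*}x\in A_{+}$. By the $C^{*}$-identity $\norm{xe_N}^{2}=\norm{e_N a e_N}$, and $xp=0$ gives $pap=(xp)^{*}(xp)=0$, hence also $ap=pa=0$. Recall from \lemref{dec} that $(e_N)_N$ is a decreasing sequence in $T$ with $e_N e_{N+1}=e_{N+1}$, so that $e_M e_N=e_N e_M=e_N$ whenever $N\ge M$; in particular $e_{N+1}ae_{N+1}=e_{N+1}(e_N a e_N)e_{N+1}$, whence $\norm{e_{N+1}ae_{N+1}}\le\norm{e_N a e_N}$ and the limit $L:=\lim_N\norm{e_N a e_N}\ge 0$ exists. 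Suppose $L>0$.

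The key move is to replace the positive functionals $b\mapsto\phi(e_N b e_N)$, whose norm is only $\le 1$, by genuine states. For each $N$ choose a state $\phi_N$ on $A$ with $\phi_N(e_N a e_N)\ge L$ (the norm of a positive element of $A$ is attained on the state space), and set
\[
\psi_N(b):=\phi_N(e_N b e_N)+\bigl(1-\phi_N(e_N^{2})\bigr)\,\omega(b),\qquad b\in A .
\]
Since $0\le\phi_N(e_N^{2})\le 1$, each $\psi_N$ is a state with $\psi_N(a)\ge\phi_N(e_N a e_N)\ge L$. The crucial computation is that for $N\ge M$ one has $e_N e_M e_N=e_N^{2}$, while $\omega(e_M)=\langle\pi(e_M)\xi_0,\xi_0\rangle=1$ because $e_M\in T$; hence $\psi_N(e_M)=\phi_N(e_N^{2})+(1-\phi_N(e_N^{2}))=1$ for all $N\ge M$. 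Now take a weak$^{*}$-cluster point $\psi$ of $(\psi_N)_N$ in the (weak$^{*}$-compact) quasi-state space: then $\psi(a)\ge L$ and $\psi(e_M)=1$ for every $M$, the latter forcing $\norm{\psi}=1$, so $\psi$ is a state. Viewing $\psi$ as its unique normal extension to $A^{**}$ (recall $A^{*}=(A^{**})_{*}$) and using $e_M\searrow p$, normality gives $\psi(p)=\lim_M\psi(e_M)=1$; hence $\psi$ is supported on $p$, and $\psi(a)=\psi(pap)=0$, contradicting $\psi(a)\ge L>0$. Therefore $L=0$, i.e. $\norm{xe_N}\to 0$.

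The point where care is needed — and where the most naive attempt breaks — is the passage from $\sigma$-weak to norm convergence. One is tempted to note that $e_N a e_N\to pap=0$ $\sigma$-weakly and to invoke a Dini-type theorem on the (weak$^{*}$-compact) state space, but $e_N a e_N$ need not be a decreasing operator sequence (only the numbers $\norm{e_N a e_N}$ decrease), so Dini does not apply to it directly. The construction of the honest states $\psi_N$, tuned via the affine combination with $\omega$ so that $\psi_N(e_M)$ is exactly $1$, is the device that lets a weak$^{*}$-cluster point carry full mass on $p$ and thereby closes this gap.
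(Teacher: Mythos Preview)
Your argument is correct, but the paper's proof is shorter and does, in fact, use Dini's theorem directly. The trick you missed is to look at $xe_N^{2}x^{*}$ rather than $e_Nx^{*}xe_N$: from $e_Ne_{N+1}=e_{N+1}$ one gets $e_{N+1}^{2}=e_Ne_{N+1}^{2}e_N\le e_N^{2}$, so $(xe_N^{2}x^{*})_N$ is a genuinely decreasing sequence of positive elements. Then the continuous functions $f_N(\phi):=\phi(xe_N^{2}x^{*})$ decrease pointwise to $\phi(xpx^{*})=0$ on the weak$^{*}$-compact state space, and Dini gives $\norm{xe_N}^{2}=\norm{xe_N^{2}x^{*}}=\sup_{\phi}f_N(\phi)\to 0$. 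So your final paragraph is a little off: the ``naive'' Dini approach does work once one flips from $e_Nae_N$ to $xe_N^{2}x^{*}$. Your cluster-point construction with the convex correction by $\omega$ is a nice workaround that avoids needing operator monotonicity altogether, at the cost of a longer argument; it would be the right idea in situations where no monotone rearrangement is available.
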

 
\begin{proof}
For a state $\phi $ on $A$, 
we define $f_N (\phi ) := \phi (x e_N^2 x^*)$. 
Since $(e_N^2)_N $ is also decreasing, 
$(f_N (\phi ))_N $ converges to $\phi (xpx^*) =0$. 
Since $f_N (\phi )$ is continuous for each $N$ as a function on the state space of $A$ with the weak* topology, 
which is compact, 
it follows that $(f_N )_N $ converges uniformly to $0$. 
Thus we have $\norm{x e_N^2 x^* } = \sup _{\phi } f_N (\phi ) \rightarrow 0$, 
whence $ \norm{x e_N } \rightarrow 0$. 
\end{proof}
 
\begin{lemma} \label{afix}  
Let $x$ be an element of $T$ and $\beta $ an automorphism of $A$ 
and $V$ a unitary such that $V \xi _0 = \xi _0 $ and {\rm Ad}$V \circ \pi = \pi \circ \beta $. 
Then for any $\epsilon > 0$ 
there exists a $b \in T$ 
such that $xb = b$ and $\norm{\beta (b) - b} < \epsilon $. 
\end{lemma}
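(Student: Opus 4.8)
The plan is to produce $b$ of the form $b = g(y)$, where $g$ is a fixed continuous function on $[0,1]$ with $g(0)=0$ and $g \equiv 1$ on a neighbourhood of $1$, and where $y$ is a positive contraction in the hereditary subalgebra $B_x := \{z \in A : xz = zx = z\} = A \cap \chi_{\{1\}}(x)A^{**}\chi_{\{1\}}(x)$ with $\omega(y) = 1$ and $\|\beta(y) - y\|$ as small as we wish, $\omega := \langle \pi(\cdot)\xi_0,\xi_0\rangle$. Granting such a $y$: since $g(0)=0$ we have $b = g(y) \in C^*(y) \subseteq B_x$, so $xb = b$; since $\omega(y)=1$ forces $\pi(y)\xi_0 = \xi_0$ we get $\pi(b)\xi_0 = g(1)\xi_0 = \xi_0$; choosing a second continuous $g'$ with $g'(0)=0$, $g'(1)=1$ and $g'$ supported inside $\{g=1\}$ gives $b\,g'(y) = (gg')(y) = g'(y)$ and $\pi(g'(y))\xi_0 = \xi_0$, so $b \in T$; and $\|\beta(b) - b\| = \|g(\beta(y)) - g(y)\|$ tends to $0$ with $\|\beta(y)-y\|$, uniformly in $\beta(y), y$, because $g$ is uniformly approximable on $[0,1]$ by polynomials $p$ and $z \mapsto p(z)$ is norm-Lipschitz on contractions. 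Hence, with $g$ fixed, it is enough to make $\|\beta(y)-y\|$ sufficiently small.

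To produce $y$: from $V\xi_0 = \xi_0$ one gets $\beta(p) = p$ (as for $\alpha$: $\omega(\beta(p)) = \omega(p) = 1$ forces $p \le \beta(p)$, and $\beta^{-1}$ gives equality), where $p := \supp\omega$, and moreover $\omega \circ \beta = \omega$ on all of $A$. Put $q := \chi_{\{1\}}(x)$ and $Q := \bigwedge_{j\in\mathbb{Z}}\beta^j(q) \in A^{**}$. Since $x \in T$ gives $x \ge p$ and $q \ge p$, and since $\beta^j(q) = \chi_{\{1\}}(\beta^j(x)) \ge \beta^j(p) = p$ for every $j$, we have $p \le Q \le q$ and $\beta(Q) = Q$. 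Thus $C := A \cap QA^{**}Q = \bigcap_{j\in\mathbb{Z}} B_{\beta^j(x)}$ is a hereditary subalgebra of $A$ with $C \subseteq B_x$, $\beta(C) = C$, and $\omega|_C$ a $\beta$-invariant pure state of $C$ with support $p$. Choose a decreasing sequence $(z_n)$ of positive contractions in $C$ with $z_n \searrow p$; for $k \ge 1$ the $\beta$-averages $z_n^{(k)} := \tfrac{1}{2k+1}\sum_{|j|\le k}\beta^j(z_n)$ again lie in $C$ (as $\beta(C)=C$), form a decreasing sequence of positive contractions with $z_n^{(k)} \searrow p$, and satisfy $\|\beta(z_n^{(k)}) - z_n^{(k)}\| \le \tfrac{2}{2k+1}$. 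Then $y := \sum_n 2^{-n} z_n^{(k)} \in C \subseteq B_x$ is a positive contraction with $\omega(y) = \sum_n 2^{-n}\omega(z_n) = 1$ (using $\omega\circ\beta=\omega$ and $z_n \ge p$) and $\|\beta(y) - y\| \le \tfrac{2}{2k+1}$, which is made small by taking $k$ large. This gives the desired $b = g(y)$.

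The delicate point — the hard part — is the claim that $\omega|_C$ really is a state of $C$ of support exactly $p$, equivalently that $C$ is large enough near $\xi_0$ for a decreasing sequence $z_n \searrow p$ to exist inside it. In general $Q$ is only a \emph{closed} projection (an infimum of the closed projections $\beta^j(q)$), so $QA^{**}Q$ need not be $C^{**}$ a priori, and the infinite intersection $C = \bigcap_{j}B_{\beta^j(x)}$ of hereditary subalgebras could in principle collapse. This is precisely where the hypothesis must be used in full: every $\beta^j(x)$ lies in $T$ (with associated decreasing sequence $\beta^j(e_N)$, $\beta^j(e_N)\searrow p$), and one must see that these "towers'' can be coordinated so that $C$ still carries $\omega$ with support $p$ — e.g. via Kadison's transitivity (\lemref{kad}) applied in the corners, or via \lemref{dini} together with the relations $\|(\beta^j(e_M) - e_M)e_N\| \to 0$ as $N \to \infty$ (which follow from $\beta^j(p)=p$ and $e_{M-1}e_N = e_N$), exhibiting the $e_N$ as asymptotically $\beta$-central and hence asymptotically inside $C$.
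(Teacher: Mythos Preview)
Your reduction to finding a positive contraction $y$ with $\omega(y)=1$, $xy=yx=y$, and $\norm{\beta(y)-y}$ small is sound, and the functional-calculus step $b=g(y)$ is fine. The gap is exactly where you flag it: you need a decreasing sequence $(z_n)$ inside $C:=\bigcap_{j\in\mathbb Z}B_{\beta^j(x)}$ with $z_n\searrow p$, but you never establish that $C$ contains \emph{any} element with $\omega$-value~$1$. In fact nothing in the hypotheses prevents $Q=\bigwedge_j\beta^j(q)$ from equalling $p$ itself, and in that case $C=pA^{**}p\cap A=\mathbb{C}p\cap A=\{0\}$, since \lemref{ncpt} forces $p\notin A$. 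Your closing remarks about the $e_N$ being ``asymptotically $\beta$-central'' point in the right direction but do not manufacture honest elements of $C$; the argument as written needs elements of $C$, not approximate ones.

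The paper's proof sidesteps $C$ entirely. Rather than first passing to a $\beta$-invariant hereditary subalgebra and then averaging, it averages first and only afterwards pushes the result into $B_x$ \emph{approximately}. Concretely: take the standard sequence $(e_N)\subset T$ with $e_N\searrow p$ and set $b_N:=\beta_f(e_N)$ for $f\in\ell^1(\mathbb Z)_+$ with $\sum_n f(n)=1$ and $\sum_n|f(n-1)-f(n)|<\epsilon$, so that $\pi(b_N)\xi_0=\xi_0$ and $\norm{\beta(b_N)-b_N}<\epsilon$. Now choose $c\in T$ with $xc=c$; from $(c-1)p=0$ and $\beta(p)=p$ one gets $\norm{cb_N-b_N}\to 0$ (this is the $\beta$-version of the argument in \lemref{en}, which uses only $\beta(p)=p$). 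Hence $cb_Nc$ lies genuinely in $B_x$, satisfies $\pi(cb_Nc)\xi_0=\xi_0$, and has $\norm{\beta(cb_Nc)-cb_Nc}$ small for large $N$; applying a function $g_N$ with $g_N\equiv 1$ near $1$ then yields $b:=g_N(cb_Nc)\in T$ with $xb=b$ and $\norm{\beta(b)-b}<\epsilon$. The point is that the compression by $c$ costs nothing asymptotically, so one never needs $b_N$ to lie in any $\beta$-invariant hereditary subalgebra.
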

 
\begin{proof}
At first, note that since $V \xi _0 = \xi _0 $ implies $\omega (\beta (p))=1 $, 
we have $\beta (p) =p $, 
where $p:= \supp \omega $. 
Let $(e_N)_N $ be a decreasing sequence for $\xi _0$ as before. 
Let $f$ be a function on $\mathbb{Z}$ 
such that $f \geq  0, \sum_{n \in \mathbb{Z}} f(n) =1 $, 
and $\sum_{n \in \mathbb{Z}} |f(n-1) - f(n)| < \epsilon $ 
(for example, $f(n) =1/(2N+1) \ {\rm for} \ -N \leq n \leq N , \ =0 \ {\rm otherwise} $), 
and let $b_N = \beta _f (e_N)$. 
Then we have 
\begin{align*} 
& \pi (b_N) \xi _0 
= \xi _0 , \\ 
& \norm{\beta (b_N) - b_N} 
< \epsilon . 
\end{align*} 
Take an element $c \in T$ such that $cx=c$. 
Then it follows that $(c-1)p=0$ 
since $\pi (p)$ is the one-dimensional projection onto $\mathbb{C} \xi _0 $ 
(here $1$ is in the unitization of $A$ when $A$ is non-unital). 
By \lemref{dini}, we have 
\[ \norm{c e_N - e_N} \rightarrow 0. \] 
By \lemref{en}, it follows that 
\[ \norm{cb_N - b_N} \rightarrow 0 . \] 
Let \[
g_N (t):=\left\{
\begin{array}{ll}
\frac{N}{N-1} t & \: ( 0 \leq t \leq 1-1/N ) \\
1 & \: ( 1-1/N \leq t \leq 1 ) 
\end{array}
\right. . \] 
Then it follows that $\sup _{0 \leq t \leq 1 } |g_N (t)-t| \rightarrow 0$. 
Now $b:= g_N (c b_N c)$ for a sufficiently large $N$ satisfies all of the conditions of the lemma.
\end{proof}

\begin{lemma} \label{ind} 
Let $A$ be a separable $C^*$-algebra, $\alpha$ an automorphism on $A$, 
$\pi$ a faithful $\alpha$-covariant irreducible representation of $A$ on a Hilbert space ${\mathcal H}$, 
$U$ the implementing unitary for $\alpha $, 
and $\xi _0$ a unit vector such that $U \xi _0 = \xi _0$. 
Let $(p_1 , p_2 , \cdots , p_m)$ be a sequence in $\mathbb{R} / 2 \pi \mathbb{Z}$ 
and $(x_0 , x_1 , \cdots , x_m)$ a sequence in $A_1 $ with $x_0 \in T$ such that 
\begin{align*} 
& U \pi (x_k) \xi _0 = e^{ip_k} \pi (x_k) \xi _0 , \\ 
& x_j^* x_k = 0 \hspace{3em} 
{\rm if} \ j \neq k , \\ 
& x_j x_k = 0 \hspace{3em} 
{\rm if} \ k \neq 0 , \\ 
& x_j^* x_j x_0 = x_0 \hspace{3em} 
{\rm if} \ j \neq 0 
\end{align*} 
for $j,k=0,1, \cdots ,m$, where $p_0 =0$. 
Let $(q_1 , q_2 , \cdots , q_n)$ be a sequence in $\mathbb{R} / 2 \pi \mathbb{Z}$ and $\epsilon >0$.

Then there exist a sequence $(y_0 , y_1 , \cdots , y_n)$ in $A$ 
with $y_0 \in T$ and $\norm{y_{\l}} = 1 $ for $\l=0,1,\cdots , n$, 
and $v$ in ${\mathcal U} (A)$ 
such that $\norm{v-1} < \epsilon $, 
\begin{align*} 
& x_0 y_{\l} = y_{\l} x_0 = y_{\l} , \\ 
& y_j^* y_{\l} = 0 \hspace{3em} 
{\rm if} \ j \neq \l , \\ 
& y_j y_{\l} = 0 \hspace{3em} 
{\rm if} \ \l \neq 0 , \\ 
& y_j^* y_j y_0 = y_0 \hspace{3em} 
{\rm if} \ j \neq 0 
\end{align*} 
for $j,\l=0,1, \cdots ,n$ and 
\begin{align*} 
& U^{(v)} \pi (x_k y_{\l}) \xi _0 = e^{i(p_k + q_{\l} )} \pi (x_k y_{\l}) \xi _0 , \\ 
& \norm{(\alpha ^{(v)} (x_k y_{\l}) - e^{i(p_k + q_{\l} )} x_k y_{\l} ) y_0 } < \epsilon , 
\end{align*} 
for $k=0,1, \cdots , m$ and $\l=0,1, \cdots , n $ with $q_0 =0$. 
\end{lemma}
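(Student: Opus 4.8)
The plan is to read this as the inductive step of a covariant Glimm construction: from the ``spectral partial matrix unit system'' $(x_0,\dots,x_m)$ carrying the $U$-spectral data $(p_0,\dots,p_m)$ we manufacture the refined system $(x_ky_\l)$ carrying the data $(p_k+q_\l)$, where the new ``columns'' $y_1,\dots,y_n$ are honest matrix units in $A$ whose images $\pi(y_\l)\xi_0$ are approximate $U$-eigenvectors with eigenvalues $e^{iq_\l}$, and $v$ is the small perturbation of the implementing unitary that turns these approximate eigenvalue equations into exact ones for $U^{(v)}$. The ingredients are \lemref{eigqq} (richness of the spectral subspaces, used to produce approximate eigenvectors living under $x_0$), \lemref{gs} (Gram--Schmidt, to orthogonalise), \lemref{kad} (Kadison transitivity, to realise simultaneously the unitary $v$ and the matrix units inside $A$) and \lemref{dini} (to obtain the final norm estimate by taking $y_0$ deep in the sequence $(e_N)$). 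Since $\Gamma(\alpha)=\mathbb T$ gives $\spec(U)=\mathbb T$, every spectral subspace $E_U(I)\mathcal H$ of a nonempty open interval is infinite-dimensional, a fact I will use repeatedly.

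First I would set $\xi_k:=\pi(x_k)\xi_0$; from $x_k^*x_kx_0=x_0$ one gets $\pi(x_k^*x_k)\xi_0=\xi_0$, so the $\xi_k$ form an orthonormal family with $U\xi_k=e^{ip_k}\xi_k$, whence $(\alpha(x_k)-e^{ip_k}x_k)p=0$ with $p=\supp\omega$. Fix a moderate index $N_1$ with $e_{N_1}\le x_0$ (remark after \lemref{dec}) and $\norm{(\alpha(x_k)-e^{ip_k}x_k)e_{N_1}}$ tiny for all $k$ (\lemref{dini}). Then for $\l=1,\dots,n$ in turn, use \lemref{eigqq}, i.e. $\norm{\pi(e_{N_1})E_U(q_\l-\epsilon'',q_\l+\epsilon'')}=1$, to pick a unit vector $\chi_\l$ proportional to $\pi(e_{N_1})E_U(q_\l-\epsilon'',q_\l+\epsilon'')\psi$ for a near-maximiser $\psi$; then $\norm{\pi(e_{N_1})\chi_\l-\chi_\l}$ and $\norm{U\chi_\l-e^{iq_\l}\chi_\l}$ are tiny, and, using infinite-dimensionality of the relevant spectral subspace, $\chi_\l$ can in addition be taken orthogonal to $\xi_0,\dots,\xi_m,\chi_1,\dots,\chi_{\l-1}$. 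Applying \lemref{gs} I replace $\chi_1,\dots,\chi_n$ by an orthonormal family $\eta_1,\dots,\eta_n$ close to them (with $\eta_0:=\xi_0$), so that $\{\xi_0,\eta_1,\dots,\eta_n\}$ is orthonormal while each $\eta_\l$ still approximately satisfies $U\eta_\l\approx e^{iq_\l}\eta_\l$ and $\pi(e_{N_1})\eta_\l\approx\eta_\l$.

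Next I would realise the algebra in $A$. By a Glimm-type application of Kadison transitivity I construct a matrix-unit system $\{e_{ij}\}_{0\le i,j\le n}$ in $A$ with $x_0e_{ij}=e_{ij}x_0=e_{ij}$, $e_{00}\in T$, and $\pi(e_{ij})$ acting as the standard matrix units on $\mathrm{span}\{\xi_0,\eta_1,\dots,\eta_n\}$; put $y_\l:=e_{\l0}$ for $\l\ge1$, so $\pi(x_ky_\l)\xi_0=\pi(x_k)\eta_\l$, and (using $x_k^*x_ke_{\l0}=e_{\l0}$) these $(m+1)(n+1)$ vectors are orthonormal. Let $V$ be a unitary on $\mathcal H$ extending the assignment $\pi(x_k)\eta_\l\mapsto e^{i(p_k+q_\l)}U^*\pi(x_k)\eta_\l$, which is isometric on their span because the phase is $1$ on the diagonal. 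Using $U\pi(x_k)\eta_\l=\pi(\alpha(x_k))U\eta_\l$, $U\eta_\l\approx e^{iq_\l}\eta_\l$, $\eta_\l\approx\pi(e_{N_1})\eta_\l$ and the choice of $N_1$, one gets $\norm{V\pi(x_k)\eta_\l-\pi(x_k)\eta_\l}=\norm{U\pi(x_k)\eta_\l-e^{i(p_k+q_\l)}\pi(x_k)\eta_\l}<\delta$, with $\delta=\delta(\epsilon,(m+1)(n+1))$ from \lemref{kad}; that lemma then produces $v\in\mathcal U(A)$ with $\norm{v-1}<\epsilon$ and $\pi(v)\pi(x_k)\eta_\l=e^{i(p_k+q_\l)}U^*\pi(x_k)\eta_\l$, i.e. $U^{(v)}\pi(x_ky_\l)\xi_0=e^{i(p_k+q_\l)}\pi(x_ky_\l)\xi_0$; in particular $U^{(v)}\xi_0=\xi_0$. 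The algebraic relations among the $x_ky_\l$ then follow routinely from those among the $y_\l$, and the $y_\l=e_{\l0}$ together with $y_0:=e_N$ for $N$ large with $e_N\le e_{00}$ (such a decreasing sequence inside the hereditary subalgebra of $e_{00}$ exists by the remark after \lemref{dec}) satisfy the four required relations. For the estimate, $U^{(v)}\xi_0=\xi_0$ and the eigenvalue equation give $(\alpha^{(v)}(x_ky_\l)-e^{i(p_k+q_\l)}x_ky_\l)p=0$, so \lemref{dini} yields $\norm{(\alpha^{(v)}(x_ky_\l)-e^{i(p_k+q_\l)}x_ky_\l)e_N}\to0$; choosing $N$ large after $v$ and the $e_{ij}$ are fixed makes this $<\epsilon$, which is the claim with $y_0=e_N$.

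The main obstacle is the third step: producing inside $A$ matrix units $e_{ij}$ that at once implement the prescribed behaviour on $\mathrm{span}\{\xi_0,\eta_1,\dots,\eta_n\}$, satisfy $x_0e_{ij}=e_{ij}x_0=e_{ij}$ exactly, and remain compatible with the perturbation, all while the resulting $\norm{v-1}$ stays below $\epsilon$; this forces a careful ordering of the Kadison-transitivity applications and a careful choice of the tolerances ($\epsilon''\ll\delta$, $N_1$ moderate but $N$ large). A secondary delicate point is the orthogonality in the second step when some of the $q_\l$ coincide or vanish, which is handled via the infinite-dimensionality of the spectral subspaces of $U$.
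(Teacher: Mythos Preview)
Your outline is the paper's own argument: \lemref{eigqq} to produce approximate $U$-eigenvectors beneath $x_0$, Gram--Schmidt, Kadison transitivity both to realise the $y_\ell$'s in $A$ and to manufacture the perturbing unitary $v$, and finally \lemref{dini} to shrink $y_0$ for the last estimate. Two points where the paper is sharper than your sketch are worth recording.

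First, instead of taking $\chi_\ell$ proportional to $\pi(e_{N_1})E_U(I)\psi$ (which, as you note, only gives $\pi(e_{N_1})\chi_\ell\approx\chi_\ell$), the paper works in the \emph{exact} $1$-eigenspace $P\mathcal H$ of $\pi(e_N)$: since $e_N e_{N+1}=e_{N+1}$ one has $\pi(e_{N+1})\le P$, so \lemref{eigqq} gives $\|PE_U(I)P\|=1$ and one may choose $\eta_\ell\in P\mathcal H$ from the start. This is precisely what lets the auxiliary element $b$ (with $\pi(b)\eta_\ell=(\ell+1)\eta_\ell$, obtained by Kadison transitivity) be replaced by $e_N b e_N$ without spoiling the eigenvector equations, after which $y_\ell\mapsto f_\ell(b)y_\ell f_0(b)$ yields $x_0 y_\ell=y_\ell x_0=y_\ell$ on the nose. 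Your ``main obstacle'' dissolves once the $\eta_\ell$'s are placed in $P\mathcal H$ rather than merely near it.

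Second, for repeated (or vanishing) $q_\ell$'s, infinite-dimensionality of $E_U(I)\mathcal H$ alone is not what is used; the paper invokes \lemref{ncpt} to show that $\pi(e_{N+1})h(U)\pi(e_{N+1})$, and hence $PE_U(I)P$, has \emph{essential} norm $1$, so that $\|(P-F)E_U(I)(P-F)\|=1$ for every finite-rank $F\le P$. That is the mechanism producing arbitrarily many mutually orthogonal near-eigenvectors inside $P\mathcal H$. Finally, your ``matrix units $e_{ij}$'' should really be the \emph{quasi}-matrix units of Glimm--Pedersen: $A$ need not contain any projections, and the lemma only demands the quasi relations, which the functional-calculus construction with $f_\ell(b)$ and the inductive $z_i$'s delivers.
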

 
\begin{proof}
We may assume that $(q_{\l} - \epsilon , q_{\l} + \epsilon )$, $\l=1, \cdots , n$ are identical or mutually disjoint. 
Let $(e_N)_N $ be a decreasing sequence in $T$ associated with $\xi _0$ as before. 
We may suppose that $e_1=x_0$. 
By \lemref{dini}, 
we can take a sufficiently large number $N$ such that 
\[ \norm{(\alpha (x_k) - e^{ip_k} x_k ) e_N} < \epsilon \] 
for $k=0,1, \cdots , m$. 
Let $P$ be the spectral projection of $\pi (e_N ) $ corresponding to the eigenvalue $1$. 
Then we have $\pi (e_N ) P=P$ and $\pi (e_{N+1} ) P = \pi (e_{N+1} )$. 
By \lemref{eigqq}, it follows for $\l=1, \cdots , n$ that 
$\norm{\pi (e_{N+1} ) E_U (q_{\l} - \epsilon , q_{\l} + \epsilon  ) \pi (e_{N+1} ) } =1$. 
Since $\pi (e_{N+1})P= \pi (e_{N+1}) $, 
we have $\norm{ PE_U (q_{\l} - \epsilon  , q_{\l} + \epsilon  )P} =1$ for $\l=1, \cdots , n$. 
So there is a unit vector $\eta _{\l} $ in $P {\mathcal H}$ for each $\l=1, \cdots , n$ such that 
$1- \langle E_U (q_{\l} - \epsilon  , q_{\l} + \epsilon  ) \eta _{\l} , \eta _{\l} \rangle < {\epsilon } ^2 $, 
which is equivalent to 
\[ \norm{E_U (q_{\l} - \epsilon  , q_{\l} + \epsilon  ) \eta _{\l} - \eta _{\l} } < \epsilon  . \] 
Since $E_U (q_{\l} - \epsilon , q_{\l} + \epsilon ) E_U (q_k - \epsilon , q_k + \epsilon ) =0$ for $q_{\l} \neq q_k $, 
we have $ |\langle \eta _{\l} , \eta _k \rangle | < 2\epsilon  $. 
For $q_{\l_1} = \cdots =q_{\l_r} (=q_{\l} )$, 
we want to take a mutually orthogonal family $(\eta _{\l_1} , \cdots , \eta _{\l_r} )$ 
such that $\eta _{\l_j} \in P {\mathcal H} $ 
and $1- \langle E_U (q_{\l} - \epsilon  , q_{\l} + \epsilon  ) \eta _{\l_j} , \eta _{\l_j} \rangle < {\epsilon } ^2 $ 
for $j=1, \cdots , r$. 
By \lemref{eigqq}, it follows that 
\begin{align*} 
& \norm{\pi (e_{N+1} ) E_U (q_{\l} - \frac{\epsilon }{2} , q_{\l} + \frac{\epsilon }{2} ) \pi (e_{N+1} ) } \\ 
& \qquad = \norm{\pi (e_{N+1} ) E_U (q_{\l} - \epsilon , q_{\l} + \epsilon  ) \pi (e_{N+1} ) } 
=1. 
\end{align*} 
Let $h:\mathbb{R} /2\pi \mathbb{Z}  \rightarrow [0,1] $ be a continuous function 
such that $h=1$ on $(q_{\l} - \epsilon /2 , q_{\l} + \epsilon /2 )$ 
and $h=0$ on the complement of $(q_{\l} - \epsilon , q_{\l} + \epsilon  )$. 
Then it follows that 
\begin{align*} 
& \pi (e_{N+1} ) E_U (q_{\l} - \epsilon , q_{\l} + \epsilon  ) \pi (e_{N+1} ) \\ 
& \qquad \geq \pi (e_{N+1} ) h(U) \pi (e_{N+1} ) \\ 
& \qquad \geq \pi (e_{N+1} ) E_U (q_{\l} - \epsilon /2 , q_{\l} + \epsilon /2 ) \pi (e_{N+1} ) , 
\end{align*} 
which implies that $\norm{\pi (e_{N+1} ) h(U) \pi (e_{N+1} ) } =1$. 
By \lemref{ncpt}, we have 
\[ \norm{Q(\pi (e_{N+1} ) h(U) \pi (e_{N+1} ))} =1 , \] 
where $Q: {\bf B} ({\mathcal H}) \rightarrow {\bf B} ({\mathcal H})/ {\bf K} ({\mathcal H}) $ is the quotient map. 
Hence it follows that 
\begin{align*} 
\norm{PE_U (q_{\l} - \epsilon  , q_{\l} + \epsilon  )P+K } 
& \geq \norm{Q(PE_U (q_{\l} - \epsilon  , q_{\l} + \epsilon  )P)} \\ 
& \geq \norm{Q(\pi (e_{N+1} ) E_U (q_{\l} - \epsilon , q_{\l} + \epsilon  ) \pi (e_{N+1} ))} \\ 
& \geq \norm{Q(\pi (e_{N+1} ) h(U) \pi (e_{N+1} ))} \\ 
&= 1 
\end{align*} 
for any $K \in {\bf K} ({\mathcal H}) $. 
For a finite rank projection $F$ such that $F \leq P $, let 

\[ K:= -(F E_U (q_{\l} - \epsilon , q_{\l} + \epsilon  ) P+P E_U (q_{\l} - \epsilon , q_{\l} + \epsilon  ) F-F E_U (q_{\l} - \epsilon , q_{\l} + \epsilon  ) F) .\] 
Then it follows that $K$ is a finite rank operator, and hence 
\begin{align*} 
& \norm{(P-F) E_U (q_{\l} - \epsilon  , q_{\l} + \epsilon  ) (P-F)} \\ 
& \qquad = \norm{PE_U (q_{\l} - \epsilon  , q_{\l} + \epsilon  )P+K} 
=1 . 
\end{align*} 
Thus we can take a desired family $(\eta _{\l_1} , \cdots , \eta _{\l_r} )$ inductively. 
We use the Gram-Schmidt orthogonalization for all $\eta _{\l} $'s. 
By \lemref{gs}, we have 
$\norm{E_U (q_{\l} - \epsilon  , q_{\l} + \epsilon  ) \eta _{\l} - \eta _{\l} } < r_n \epsilon $ after this process, 
where $r_n$ is a positive real number dependent on $n$.  

By Kadison's transitivity, 
there exists a $y_{\l} $ in $A$ such that $\norm{y_{\l} } =1$ and 
\[ \pi (y_{\l}) \xi _0 = \eta _{\l}  \] 
for $\l=1,2, \cdots , n$. 
There also exists a $b$ in $A_{+} $ such that 
\[ \pi (b) \eta _{\l} =(\l+1) \eta _{\l} \] 
for $\l=0,1, \cdots , n$, 
where $\eta _0 = \xi _0$. 
Since $\pi (e_N ) P = P$, 
we may replace $b$ by $e_N b e_N $, 
and hence we may assume that $x_0 b=b$. 
Let $(f_0 , f_1 , \cdots , f_n ) $ be a sequence of non-negative functions in $C_0 ( 0 , \infty ) $ with norm $1$ 
such that $f_{\l} (\l+1) =1$ and supp$(f_{\l} ) \subset (\l+ 1/2 , \l+ 3/2)$ for $\l=0,1, \cdots , n$. 
Then, since 
\[ \pi (f_{\l} (b) y_{\l} f_0 (b)) \eta _0 = \eta _{\l} , \] 
we may replace $y_{\l} $ by $f_{\l} (b) y_{\l} f_0 (b)$ for $\l=0,1, \cdots , n$. 
Then it follows that $x_0 y_{\l} = y_{\l} x_0 = y_{\l}$, $y_j^* y_{\l} =0$ for $j \neq \l$, 
and $y_j y_{\l} =0 $ for $j,\l=1, 2, \cdots , n$ 
besides the original conditions $\pi (y_{\l} ) \xi _0 = \eta _{\l} $ and $\norm{y_{\l}} = 1$.
 
Since $\langle \pi (y_1^* y_1 ) \xi _0 , \xi _0 \rangle = \norm{\eta _1 } = 1$ 
and $\norm{\pi (y_1^* y_1) \xi _0 } \leq 1$ 
(because $\norm{y_1} = \norm{\xi _0} = 1$), 
it follows that $\pi (y_1^* y_1 ) \xi _0 = \xi_0 $. 
Let $f$ be a non-negative function in $C_0 (0 , \infty )$ 
such that $f(t)=t^{-1/2} $ around $t=1$ and $t f(t)^2 \leq 1$ for all $t > 0$. 
Then we have $\pi (f(y_1^* y_1 )) \xi _0 = \xi _0$, and so 
\[ \pi (y_1 f(y_1^* y_1 )) \xi _0 = \eta _1 . \] 
Replacing $y_1 $ by $y_1 f(y_1^* y_1 )$, 
it follows that $y_1^* y_1 \in T$, 
since $tf(t)^2 \equiv 1$ around $t=1$. 
Take a $z_1 \in T$ such that $y_1^* y_1 z_1 = z_1 $. 
Replacing $y_2$ by $y_2 z_1 f(z_1 y_2^* y_2 z_1 )$, 
it follows that $y_2 y_1^* y_1 = y_2 $ and $y_2^* y_2 \in T$. 
Take a $z_2 \in T$ such that $y_2^* y_2 z_2 = z_2 $. 
Inductively, we replace $y_i $ by $y_i z_{i-1} f(z_{i-1} y_i^* y_i z_{i-1} )$ and obtain a $z_i \in T$. 
Set $y_0 := z_n $. 
Then we have $y_0 y_{\l}^* y_{\l} = y_0 $ and $y_0 y_{\l} =0$ for $\l=1,2, \cdots , n$. 
Thus $(y_0 , \cdots , y_n )$ satisfies the first four conditions.

Since 
\begin{align*} 
\norm{U \eta _{\l} - e^{iq_{\l}} \eta _{\l} } 
& \leq 2 \norm{E_U (q_{\l} - \epsilon , q_{\l} + \epsilon ) \eta _{\l} - \eta _{\l} } \\ 
& \qquad + \norm{U E_U (q_{\l} - \epsilon , q_{\l} + \epsilon ) \eta _{\l} - e^{iq_{\l}} E_U (q_{\l} - \epsilon , q_{\l} + \epsilon ) \eta _{\l} } \\ 
&< 2 r_n  \epsilon + \norm{\int _{q_{\l} - \epsilon }^{q_{\l} + \epsilon } (e^{it} - e^{iq_{\l} } ) dE_U (t) \eta _{\l} } \\ 
& \leq (2r_n +2) \epsilon , 
\end{align*} 
it follows that 
\begin{align*} 
& \quad \norm{U \pi (x_k y_{\l} ) \xi _0 - e^{i(p_k + q_{\l} )} \pi (x_k y_{\l} ) \xi _0 } \\ 
& \leq \norm{\pi (\alpha (x_k )) U \eta _{\l} - e^{iq_{\l} } \pi (\alpha (x_k )) \eta _{\l} } + \norm{\pi (\alpha (x_k )) \eta _{\l} - e^{ip_k } \pi (x_k ) \eta _{\l} } \\ 
& \leq \norm{U \eta _{\l} - e^{iq_{\l} } \eta _{\l} } + \norm{(\alpha (x_k ) - e^{ip_k } x_k ) e_N } \\ 
&< (2r_n +3) \epsilon . 
\end{align*} 
Since the $(m+1)(n+1)$ unit vectors $\pi (x_k y_{\l} ) \xi _0$, $k=0, \cdots , m$, $\l=0, \cdots , n$ 
are mutually orthogonal, and 
\[ U\pi (x_k y_0 ) \xi _0 = e^{ip_k} \pi (x_k y_0 ) \xi _0 \] 
for $k=0, \cdots , m$ and 
\[ \norm{U\pi (x_k y_{\l} ) \xi _0 - e^{i(p_k + q_{\l} )} \pi (x_k y_{\l} ) \xi _0 } < (2r_n +3) \epsilon \] 
for $k=0, \cdots , m , \ \l=1, \cdots , n$, 
we can use \lemref{kad} for a unitary $V$ 
such that $V \pi (x_k y_l ) \xi _0 := e^{i(p_k + q_l )} U^* \pi (x_k y_l ) \xi _0$ 
and $\pi (x_k y_{\l} ) \xi _0 $, $k=0, \cdots , m$ and $\l=0, \cdots , n$ 
to obtain a $v \in {\mathcal U} (A)$ as required except for the last condition. 
Since $y_0 \geq p$, there is another decreasing sequence $(e_N ' )_N $ 
such that $e_1 ' = y_0 $ and $e_N ' \searrow p$. 
By \lemref{dini}, there is a sufficiently large number $N$ such that 
\[ \norm{(\alpha ^{(v)} (x_k y_{\l}) - e^{i(p_k + q_{\l} )} x_k y_{\l} ) e_N ' } < \epsilon . \] 
We replace $y_0 $ by $e_N ' $ and end the proof. 
\end{proof}

\noindent 
\begin{proof}[Proof of \thmref{main}]

Up to conjugacy, we may assume that $\gamma $ is of the form 
\[ \gamma = \bigotimes_{n=1}^{\infty} {\rm Ad \ diag} (1, e^{ip_{n1}} , \cdots , e^{ip_{n,k_n -1}} ) \] 
on $D =  \otimes _{n=1} ^{\infty} M_{k_n} $, 
where diag$(\lambda _1 , \cdots , \lambda _k )$ means the diagonal matrix whose $(i,i)$ component is $\lambda _i $. 
We define $p_{n0} :=0$ for $n=0,1, \cdots $.
 
We have fixed a unit vector $\xi _0 \in {\mathcal H}$ such that $U \xi _0 = \xi _0 $. 
We choose an $e \in T$. 
Let $( \mu _n )$ be a strictly decreasing sequence of positive numbers such that 
\[ nk_1 k_2 \cdots k_n \mu _n < 1 \] 
and let $\epsilon _n := \mu _n - \mu _{n+1} $.
 
Using \lemref{ind} inductively, 
we will find suitable elements $v_m \in {\mathcal U} (A) $ for $m=0,1, \cdots $, 
and $x_{mj} \in A$ for $m=0,1, \cdots $ and $0 \leq j < k_m$. 
When $m=0$ (note that we can set $k_0 :=1$), we define $v_0 :=1$ and $x_{00} := e$.
 
Suppose $v_n \in {\mathcal U} (A) $ and $x_{nj} \in A$ for $0 \leq j < k_n$ are already defined for $n \leq m$ 
so that $x_{n0} \in T$, $\norm{x_{nj} }= 1$, $\norm{v_n -1} < \epsilon _n $, and 
\begin{align*} 
U^{(\overline{v_m} )} \pi (w_i^{(m)} ) \xi _0 &= e^{ip_i^{(m)} } \pi (w_i^{(m)} ) \xi _0 , \\ 
w_j^{(m)*} w_k^{(m)} &= 0 \hspace{3em} {\rm if} \ j \neq k , \\ 
w_j^{(m)} w_k^{(m)} &= 0 \hspace{3em} {\rm if} \ k \neq 0 , \\ 
w_j^{(m)*} w_j^{(m)} w_0^{(m)} &= w_0^{(m)} \hspace{3em} {\rm if} \ j \neq 0 , 
\end{align*} 
for all $i,j,k \in X_m $ and $0= (0,0, \cdots , 0) \in X_m $, where 
\begin{align*} 
\overline{v_m } &:= v_1 v_2 \cdots v_m  , \\ 
X_m &:= \{ i=(i_1 , i_2 , \cdots , i_m ) | 0 \leq i_n < k_n \} , \\ 
w_i^{(m)} &:= x_{1i_1} x_{2i_2 } \cdots x_{mi_m } \hspace{3em} {\rm for} \ i=(i_1 , i_2 , \cdots , i_m ) \in X_m , \\ 
p_i^{(m)} &:= p_{1i_1 } + p_{2i_2 } + \cdots + p_{mi_m } \hspace{3em} {\rm for} \ i=(i_1 , i_2 , \cdots , i_m ) \in X_m , 
\end{align*} 
and $\overline{v_0} :=1$, $X_0 := \{ 0 \} $, $w_0^{(0)} :=e$ and $p_0^{(0)} :=0$ for $m=0$. 
Then, there exist $x_{m+1 , \l} \in A $ for $0 \leq \l < k_{m+1} $ and $v_{m+1 } \in {\mathcal U} (A)$ 
such that $x_{m+1 , 0 } \in T$, $\norm{x_{m+1 , j} } =1$, $\norm{v_{m+1 } -1} < \epsilon _{m+1 } $, and 
\begin{align*} 
w_0^{(m)} x_{m+1 , \l} = x_{m+1,\l} w_0^{(m)} &= x_{m+1,\l} , \\ 
x_{m+1,j}^* x_{m+1,\l} &= 0 \hspace{3em} {\rm if} \ j \neq \l , \\ 
x_{m+1,j} x_{m+1,\l} &= 0 \hspace{3em} {\rm if} \ \l \neq 0 , \\ 
x_{m+1,j}^* x_{m+1,j} x_{m+1,0} &= x_{m+1,0} \hspace{3em} {\rm if} \ j \neq 0
\end{align*} 
for all $j,\l=0,1, \cdots , k_{m+1} -1$ and 
\begin{align*} 
& U^{(\overline{v_{m+1}})} \pi (w_k^{(m)} x_{m+1,\l} ) \xi _0 = e^{i(p_k^{(m)} + p_{m+1,\l} ) } \pi ( w_k^{(m)} x_{m+1,\l} ) \xi _0 , \\ 
& \norm{(\alpha ^{(\overline{v_{m+1}})} (w_k^{(m)} x_{m+1,\l} ) - e^{i(p_k^{(m)} + p_{m+1,\l})} w_k^{(m)} x_{m+1,\l} ) x_{m+1,0} } < \epsilon _{m+1} 
\end{align*} 
for $k \in X_m $ and $\l=0,1, \cdots , k_{m+1} -1$. 
Since $w_k^{(m)} x_{m+1,\l} = w_{(k,\l)}^{(m+1)} $, where $(k,\l) \in X_{m+1} $, it follows that 
\begin{align*} 
w_j^{(m+1)*} w_k^{(m+1)} &= 0 \hspace{3em} {\rm if} \ j \neq k , \\ 
w_j^{(m+1)} w_k^{(m+1)} &= 0 \hspace{3em} {\rm if} \ k \neq 0 , \\ 
w_j^{(m+1)*} w_j^{(m+1)} w_0^{(m+1)} &= w_0^{(m+1)} \hspace{3em} {\rm if} \ j \neq 0 , \\ 
U^{(\overline{v_{m+1} } )} \pi (w_i^{(m+1)} ) \xi _0 &= e^{ip_i^{(m+1)} } \pi (w_i^{(m+1)} ) \xi _0 , \\ 
\norm{(\alpha ^{(\overline{v_{m+1}})} (w_k^{(m+1)} ) - e^{ip_k^{(m+1)} } w_k^{(m+1)} ) w_0^{(m+1)} } &< \epsilon _{m+1} , 
\end{align*} 
for $i,j,k \in X_{m+1} $, where we used $w_0^{(m)} x_{m+1 , \l} = x_{m+1,\l} w_0^{(m)} = x_{m+1,\l} $. 
Since $w_0^{(n-1)} x_{n0} = x_{n0} w_0^{(n-1)} = x_{n0} $ for any $n$ implies $w_0^{(n)} = x_{n0} $, 
we have $(x_{nj} )_{n=1,2,\cdots , 0 \leq j < k_n } $ is a quasi-matrix system (\cite{Ped}, 6.6.1); 
i.e. $(x_{nj})_{n,j} $ satisfies for any $n$, 
\begin{align*} 
x_{n0} & \geq 0, \hspace{3em} \norm{x_{nj}} = 1 \hspace{3em} {\rm for} \ 0 \leq j < k_n , \\  
x_{ni}^* x_{nj} &= 0 \hspace{3em} {\rm for} \ i \neq j , \\ 
x_{ni} x_{nj} &= 0 \hspace{3em} {\rm for} \ j \neq 0 , \\ 
x_{nj}^* x_{nj} x_{n0} &= x_{n0} \hspace{3em} {\rm for} \ 1 \leq j < k_n , \\ 
x_{n0} x_{n+1 , j} &= x_{n+1,j } x_{n0} = x_{n+1,j} \hspace{3em} {\rm for} \ 0 \leq j < k_{n+1} . 
\end{align*} 

We define 
\[ \overline{v} := \lim _m \overline{v_m} = v_1 v_2 \cdots . \] 
Then it follows that $\norm{\overline{v} -1 } < \mu _1 $, and 
\begin{align*} 
& \norm{(\alpha ^{(\overline{v} )} (w_i^{(m)} ) - e^{ip_i^{(m)} } w_i^{(m)} ) w_0^{(m)} } \\ 
& \qquad < \norm{(\alpha ^{(\overline{v_m} )} (w_i^{(m)} ) - e^{ip_i^{(m)} } w_i^{(m)} ) w_0^{(m)} } + 2 \mu _{m+1} \\ 
& \qquad < 2 \mu _m  
\end{align*} 
for $i \in X_m $. 
Note that since $U^{(\overline{v_{m+1}})} \xi _0 = \xi_ 0 $, 
it follows that $U^{(\overline{v})} \xi _0 = \xi_ 0 $, 
which implies $\alpha ^{(\overline{v})} (p)=p$. 

By using the separability of $A$, 
we will impose another condition on the choice of $w_0^{(m)} = x_{m0} $ for each $m$. 
(We only have to replace them for sufficiently large $m$'s.) 
Fix a dense sequence $(a_n )_n $ of $A_{sa} $. 
Let $(e_N)_N$ and $(f_N)_N$ be as in \lemref{dec} and choose $a \in T$ such that 
$w_0^{(m)} a=a$. 
Set $y':= \sum 2^{-N} a e_N a$ and $z_N := f_N (y')$. 
Then we have $w_0^{(m)} z_N =z_N $ for all $N$. 
Let $b$ be an element in $A$. 
Since $z_N \searrow p$, $(z_N (b- \omega (b)) z_N )$ converges $\sigma $-weakly to $p(b- \omega (b))p$, 
which is equal to $0$ since $\pi (p)$ is the $1$-dimensional projection supporting $\omega $. 
So the norm closure of the convex hull of $\{ z_N (b- \omega (b)) z_N \} $ contains $0$. 
Thus for each $\delta >0$ there are positive numbers $(t_i)_i $ with $\sum t_i =1$ 
such that $\norm{\sum t_i z_{N_i} (b- \omega (b)) z_{N_i} } < \delta $. 
Hence whenever $N \geq N_i $ for all $i$, it follows that 
\[ \norm{z_N (b- \omega (b)) z_N } \leq \norm{z_N} \norm{\sum t_i z_{N_i} (b- \omega (b)) z_{N_i} } \norm{z_N} < \delta . \] 
We take such an $N$ and set $\tilde{w_0}^{(m)} := z_N $. 
Applying \lemref{afix}, we may assume that \[ \norm{\alpha^{(\overline{v})} (\tilde{w_0}^{(m)}) - \tilde{w_0}^{(m)} } < \mu _m. \] 
Set $\tilde{a_m} := \sum_{i,j \in X_m } \omega (w_j^{(m)*} a_m w_i^{(m)} ) w_j^{(m)} (\tilde{w}_0^{(m)})^2 w_i^{(m)*} \in B$. 
Then, by setting $b= w_i^{(m)*} a_m w_j^{(m)} $ and $\delta = \mu _m / (k_1 k_2 \cdots k_m ) $ in the argument above, 
we have 
\begin{align*} 
& \norm{(\sum _i w_i^{(m)} (\tilde{w}_0^{(m)})^2 w_i^{(m)*} )(a_m - \tilde{a_m} )(\sum _j w_j^{(m)} (\tilde{w}_0^{(m)})^2 w_j^{(m)*} )} \\ 
& \quad = \norm{\sum _{i,j} w_i^{(m)} (\tilde{w}_0^{(m)})^2 (w_i^{(m)*} a_m w_j^{(m)} - \omega (w_i^{(m)*} a_m w_j^{(m)} ) ) (\tilde{w}_0^{(m)})^2 w_j^{(m)*} } \\ 
& \quad < 1/m . 
\end{align*} 
From now on we just write $w_0^{(m)} $ instead of $\tilde{w_0}^{(m)}$. 

 Let $p_m $ be the spectral projection of $w_0^{(m)} =x_{m0} $ corresponding to $1$. 
We define 
\[ q_m := \sum _{i \in X_m } w_i^{(m)} p_m w_i^{(m)*} \] 
and for $1 \leq n \leq m$, 
\[ q_{mn} := \sum _{i \in X_{n,m } } w_i^{(m)} p_m w_i^{(m)*} \] 
where $X_{n,m } := \{ i \in X_m | i_1 =0, i_2 =0 , \cdots , i_n =0 \} $. 
Then we can prove the same consequences as 6.6 in \cite{Ped}; 
it follows that $q_m , q_{mn} $ with $1 \leq n \leq m$ are projections in $A^{**} $ satisfying 
\begin{align*} 
& x_{ni} q_m = x_{ni} q_{mn} = q_m x_{ni} , \\ 
& x_{ni}^* x_{ni} q_m = x_{n0} q_m = q_{mn} , \\ 
& x_{n+1,0} q_{m'} + \sum _{i=1}^{k_{n+1} -1} x_{n+1,i } x_{n+1, i}^* q_{m'} = x_{n0} q_{m'} 
\end{align*} 
for each $1 \leq n \leq m$, $0 \leq i < k_n $ and $m' >n+1$. 
We will check the last equality. 
Since $x_{n0} q_{n+1} =q_{n+1} x_{n0} = x_{n0} q_{n+1,n} = q_{n+1,n} x_{n0} =q_{n+1,n} $, 
it follows that $p_n q_{n+1} = q_{n+1,n} $ for each $n$. 
Hence we have 
\begin{align*} 
x_{n0} q_{n+2} 
&= x_{n0} q_{n+1} q_{n+2} = q_{n+1,n} q_{n+2} \\ 
&= \sum _{j=0 }^{k_{n+1} -1} x_{n+1,j} p_{n+1} x_{n+1,j}^* q_{n+2} 
= \sum _{j=0 }^{k_{n+1} -1} x_{n+1,j} p_{n+1} q_{n+2} x_{n+1,j}^* \\ 
&= \sum _{j=0 }^{k_{n+1} -1} x_{n+1,j} q_{n+2,n+1} x_{n+1,j}^* 
= \sum _{j=0 }^{k_{n+1} -1} x_{n+1,j} q_{n+2} x_{n+1,j}^* \\ 
&= x_{n+1,0} q_{n+2} + \sum _{i=1}^{k_{n+1} -1} x_{n+1,i } x_{n+1, i}^* q_{n+2} . 
\end{align*} 
Multiplying $q_{m'} $ ($m' >n+1$) by the right side, we get the desired equality.

We define 
\[ r_m := \sum _{i \in X_m } w_i^{(m)} w_0^{(m)^2} w_i^{(m)*} \in A . \] 
Then it follows that $q_m \leq r_m \leq q_{m-1} $. 
Let $q:= $weak*-$\lim q_m $. 
Since $(q_m )_m $ is a decreasing sequence, $q$ is a closed projection in $A^{**} $.  
 
For $i \in X_m $, we have that 
\begin{align*} 
& \hspace{1.1em} \norm{\alpha ^{(\overline{v} )} (w_i^{(m)} w_0^{(m)^2} w_i^{(m)*} ) - w_i^{(m)} w_0^{(m)^2} w_i^{(m)*} } \\ 
& \leq \norm{\alpha ^{(\overline{v} )} (w_i^{(m)} ) w_0^{(m)^2} \alpha ^{(\overline{v} )} (w_i^{(m)*} ) - w_i^{(m)} w_0^{(m)^2} w_i^{(m)*} } + 2 \mu _m \\ 
& \leq 2 \norm{(\alpha ^{(\overline{v} )} (w_i^{(m)} ) - e^{ip_i^{(m)} } w_i^{(m)} ) w_0^{(m)} } + 2 \mu _m \\ 
& \leq 6 \mu _m , 
\end{align*} 
which implies that 
\[ \norm{\alpha ^{(\overline{v} )} (r_m ) - r_m } < 6 k_1 k_2 \cdots k_m \mu _m < 6/m . \] 
Therefore we have $\alpha ^{(\overline{v} )} (q)=q $.
 
For $n \leq m$, $0 \leq i < k_n $ and $j \in X_m $, 
note that if $j \notin X_{n,m} $, it follows that $x_{ni} w_j^{(m)} =0$, 
otherwise we can write $w_{\l}^{(m)} = x_{ni} w_j^{(m)} $ for some $\l \in X_m$ 
(see the proof of \cite{Ped}, 6.6.4). 
So we have 
\begin{align*} 
& \norm{(\alpha ^{(\overline{v} )} (x_{ni} ) - e^{ip_{ni} } x_{ni} ) w_j^{(m)} w_0^{(m)} } \\ 
& \qquad < \norm{(e^{-ip_j^{(m)} } \alpha ^{(\overline{v} )} (x_{ni} w_j^{(m)} ) - e^{ip_{ni} } x_{ni} w_j^{(m)} ) w_0^{(m)} } +2 \mu _m \\ 
& \qquad = \norm{(\alpha ^{(\overline{v} )} (w_{\l}^{(m)} ) - e^{ip_{\l}^{(m)} } w_{\l}^{(m)} ) w_0^{(m)} } +2 \mu _m \\ 
& \qquad < 4 \mu _m , 
\end{align*} 
whenever $w_{\l}^{(m)} = x_{ni} w_j^{(m)} \neq 0$. 
(This inequality also holds when $x_{ni} w_j^{(m)} =0$.) 
Thus it follows that 
\begin{align*} \norm{(\alpha ^{(\overline{v} )} (x_{ni} ) - e^{ip_{ni} } x_{ni} ) q_m } 
& \leq \sum _{j \in X_m } \norm{(\alpha ^{(\overline{v} )} (x_{ni} ) - e^{ip_{ni} } x_{ni} ) w_j^{(m)} p_m } \\ 
&< 4 k_1 k_2 \cdots k_m \mu _m <4/m , 
\end{align*} 
and hence $\alpha ^{(\overline{v} )} (x_{ni} )q = e^{ip_{ni} } x_{ni} q$. 
  
Let $B$ be the $C^*$-subalgebra of $A$ generated by 
$\{ \alpha ^{(\overline{v} )^m} (x_{ni}) | m \in \mathbb{Z} , n=1,2, \cdots , 0 \leq i < k_n \} $. 
Then it is evident that $B$ is invariant under $\alpha ^{(\overline{v} )} $ and $q \in B^{**} $. 
Since $q x_{ni} = x_{ni} q $ and $q$ is $\alpha ^{(\overline{v} )} $-invariant, 
we have $q \alpha ^{(\overline{v} )} (x_{ni} ) = \alpha ^{(\overline{v} )} (x_{ni} ) q$, 
which implies that $q \in B' $. 
Since $(x_{nj} )_{n=1,2,\cdots , 0 \leq j < k_n } $ is a quasi-matrix system and 
\begin{align*} 
& x_{ni}^* x_{ni} q = x_{n0} q , \\ 
& x_{n+1,0} q + \sum _{i=1}^{k_{n+1} -1} x_{n+1,i } x_{n+1, i}^* q = x_{n0} q , 
\end{align*} 
it follows that $\{ x_{nj} q \} _{n=1,2,\cdots , 0 \leq j < k_n } $ generates a UHF algebra which is isomorphic to $D$ by 
\[ w_i^{(n)} q w_j^{(n)*} \mapsto E_{i_1 j_1 } \otimes \cdots \otimes E_{i_n j_n } \in M_{k_1} \otimes \cdots \otimes M_{k_n} \subset D \] 
for $i,j \in X_n $, where $E_{ij} $ denotes the $(i,j)$-matrix unit of $M_{k_n} $ 
(to avoid mistaking indexes, we call the top of a matrix ''the $0$-th row'' and the left end of one ''the $0$-th column''). 
Since $\alpha ^{(\overline{v} )} (x_{ni} )q = e^{ip_{ni} } x_{ni} q$, 
it follows that $\{ x_{nj} q \} _{n,j} $ generates $Bq$ and $(Bq , ( \alpha ^{(\overline{v})} )^{**} |Bq ) \simeq (D, \gamma ) $. 
And since $\norm{r_m (a_m - \tilde{a_m} ) r_m } < 1/m$ and $r_m q=q $ 
(because $q=qq \geq q r_m q \geq q q q =q$ and $q r_m = r_m q$), 
it follows that $qAq=Bq$. 

Finally we show that $xc(q)=0$ implies $x=0$ for $x \in A$. 
Since $w_i^{(n)*} x_{n0} =0$ for $i \in X_n$ unless $i=0 $, it follows that 
\begin{align*} 
q_n x_{n0} &= \sum _{i \in X_n} w_i^{(n)} p_n w_i^{(n)*} x_{n0} \\ 
&= x_{n0} p_n x_{n0}^* x_{n0} = p_n . 
\end{align*} 
Thus we have 
\[ \pi (q_n ) \xi _0 = \pi (q_n x_{n0} ) \xi _0 = \pi (p_n ) \xi _0 = \xi _0 , \] 
which implies $\omega (q_n )=1$ for each $n$. 
Hence it follows that $\omega (q) = 1$, which is equivalent to $q \geq p$. 
So it suffices to show that $xc(p)=0$ implies $x=0$ for $x \in A$. 
Since $\pi (p) \xi _0 = \xi _0 $ and $c(p) \geq p$, we have $\pi (c(p)) \xi _0 = \xi _0 $. 
Thus, for any $x,y \in A$, it follows that 
\[ \pi (x) (\pi (y) \xi _0 ) = \pi (x) \pi (y) \pi (c(p)) \xi _0 = \pi (xc(p)) \pi(y) \xi _0 = 0 . \] 
Note that since $\pi $ is irreducible, we have $\xi _0$ is a cyclic vector. 
Hence it follows that $\pi (x) =0$, which implies $x=0$.
\end{proof}

\end{document}